\def\today{\number\day\space\ifcase\month\or   January\or February\or
   March\or April\or May\or June\or   July\or August\or September\or
   October\or November\or December\fi\   \number\year}
\theoremstyle{plain}
\newtheorem{lma}{Lemma}[section]
\newaliascnt{thmCt}{lma}
\newtheorem{thm}[thmCt]{Theorem}
\newaliascnt{corCt}{lma}
\newtheorem{cor}[corCt]{Corollary}
\newaliascnt{propCt}{lma}
\newtheorem{prop}[propCt]{Proposition}
\newtheorem*{thm*}{Theorem}
\newtheorem*{cor*}{Corollary}
\newtheorem*{prop*}{Proposition}
\theoremstyle{plain}
\newtheorem{thmintro}{Theorem} %Use this counter for things in the introduction
\newaliascnt{introcorCt}{thmintro}
\newaliascnt{egintroCt}{thmintro}
\newtheorem{egintro}[egintroCt]{Example}
\theoremstyle{definition}
\newaliascnt{introquesCt}{thmintro}
\newaliascnt{pbmintroCt}{thmintro}
\newtheorem{pbmintro}[pbmintroCt]{Problem}
\newaliascnt{dfnintroCt}{thmintro}
\newtheorem{dfnintro}[dfnintroCt]{Definition}
\newaliascnt{pgrCt}{lma}
\newaliascnt{dfCt}{lma}
\newtheorem{df}[dfCt]{Definition}
\newaliascnt{remCt}{lma}
\newtheorem{rem}[remCt]{Remark}
\newaliascnt{remsCt}{lma}
\newaliascnt{egCt}{lma}
\newtheorem{eg}[egCt]{Example}
\newaliascnt{egsCt}{lma}
\newaliascnt{qstCt}{lma}
\newaliascnt{pbmCt}{lma}
\newaliascnt{notaCt}{lma}
\theoremstyle{theorem}
\newtheorem{clm}{Claim}[thmCt]
\newcommand{\beq}{\begin{equation}}
\newcommand{\eeq}{\end{equation}}
\newcommand{\beqa}{\begin{eqnarray*}}
\newcommand{\eeqa}{\end{eqnarray*}}
\newcommand{\bal}{\begin{align*}}
\newcommand{\eal}{\end{align*}}
\newcommand{\bi}{\begin{itemize}}
\newcommand{\ei}{\end{itemize}}
\newcommand{\be}{\begin{enumerate}}
\newcommand{\ee}{\end{enumerate}}
\newcommand{\acton}{\curvearrowright}
\newcommand{\ep}{\varepsilon}
\newcommand{\Q}{{\mathbb{Q}}}
\newcommand{\Z}{{\mathbb{Z}}}
\newcommand{\N}{{\mathbb{N}}}
\newcommand{\K}{{\mathcal{K}}}
\newcommand{\diag}{{\mathrm{diag}}}
\newcommand{\supp}{{\mathrm{supp}}}
\newcommand{\Aut}{{\mathrm{Aut}}}
\newcommand{\Prob}{{\mathrm{Prob}}}
\newcommand{\dd}{^{**}}
\newcommand{\dfin}{^{**}_{\mathrm{fin}}}
\numberwithin{equation}{section}
\newcommand{\I}{\infty}
\title[]{Tracially amenable actions and purely infinite crossed products}
\thanks{All authors were supported by the Deutsche Forschungsgemeinschaft (DFG, German Research Foundation) under Germany's Excellence Strategy EXC 2044 –390685587, Mathematics M\"unster: Dynamics–Geometry–Structure, through SFB 1442, and by the ERC Advanced Grant
834267 - AMAREC.
The first named author was supported by a research grant of 
the Swedish Research Council.
The second named author was supported by the project G085020N 
funded by the Research Foundation Flanders (FWO), and by the generosity of Eric and Wendy Schmidt by recommendation of the Schmidt Futures program.
The fifth named author was supported by European Union's Horizon 2020 research and innovation program under the Marie Sk\l{}odowska-Curie grant agreement No.~891709.
}
\author[Gardella]{Eusebio Gardella}
\address{Eusebio Gardella
Department of Mathematical Sciences, Chalmers University of
Technology and University of Gothenburg, Gothenburg SE-412 96, Sweden.}
\email{gardella@chalmers.se}
\urladdr{www.math.chalmers.se/~gardella}
\author[Geffen]{Shirly Geffen}
\address{Shirly Geffen,
Mathematisches Institut, Fachbereich Mathematik und Informatik der
Universit\"at M\"unster, Einsteinstrasse 62, 48149 M\"unster, Germany.}
\email{sgeffen@uni-muenster.de}
\urladdr{https://shirlygeffen.com/}
\author[Kranz]{Julian Kranz}
\address{Julian Kranz,
Mathematisches Institut, Fachbereich Mathematik und Informatik der
Universit\"at M\"unster, Einsteinstrasse 62, 48149 M\"unster, Germany.}
\email{julian.kranz@uni-muenster.de}
\urladdr{https://www.uni-muenster.de/IVV5WS/WebHop/user/j\_kran05/}
\author[Naryshkin]{Petr Naryshkin}
\address{Petr Naryshkin,
Mathematisches Institut, Fachbereich Mathematik und Informatik der
Universit\"at M\"unster, Einsteinstrasse 62, 48149 M\"unster, Germany.}
\email{pnaryshk@uni-muenster.de}
\urladdr{http://petrnaryshkin.wordpress.com}
\author[Vaccaro]{Andrea Vaccaro}
\address{Andrea Vaccaro, Mathematisches Institut, Fachbereich Mathematik und Informatik der
Universit\"at M\"unster, Einsteinstrasse 62, 48149 M\"unster, Germany.}
\email{avaccaro@uni-muenster.de}
\urladdr{https://sites.google.com/view/avaccaro}
\begin{document}
 
\begin{abstract}

We introduce the notion of tracial amenability for actions of discrete groups on unital, tracial
C$^*$-algebras, as a weakening of amenability where all the 
relevant approximations are done in the uniform trace norm.
We characterize tracial amenability with various equivalent conditions,
including topological amenability of the induced action on the trace space. 
Our main result concerns the structure of crossed products: for groups containing the free group $F_2$, we show that outer,
tracially amenable actions on simple, unital, $\mathcal{Z}$-stable C$^*$-algebras
always have purely infinite crossed products. 
%This is new even in the well-studied case of amenable actions.
Finally, we give concrete examples of tracially amenable actions of 
free groups on simple, unital AF-algebras. 
\end{abstract}

\maketitle
%\vspace{-.6cm}
%\tableofcontents
%\vspace{-.9cm}

\section{Introduction}

The theory of amenable actions on C$^*$-algebras is an important tool for studying approximation properties of crossed products. Already initiated in \cite{AD_systemes_1987}, this topic has recently received a lot of attention after it gained new impetus in \cite{BusEchWil22} (see also \cite{BusEchWil20,AbadieBussFerraro,BeardenCrann,Ozawa_Suzuki_2020}). As recently established in \cite{Ozawa_Suzuki_2020},
the notion of amenability is equivalent to the so-called \emph{quasicentral approximation property} (QAP) from \cite{BusEchWil20}. The QAP 
is a versatile tool making powerful averaging techniques accessible, with evidence being the classification of amenable, outer actions on Kirchberg algebras \cite{Gabe_Szabo_2022} or the equivariant $\mathcal O_2$-absorption theorem \cite{Suz_Equivariant_2021}.

The motivation for this paper originates from Elliott's classification program, a long-time endeavour in the theory of C$^*$-algebras
aiming to classify nuclear C$^*$-algebras by $K$-theoretic and tracial data. Elliott's program is now considered to be essentially
completed,
with unital, simple, separable, nuclear, $\mathcal Z$-stable C$^*$-algebras satisfying the Universal Coefficient Theorem (UCT) of Rosenberg
and Schochet \cite{RosSch_kunneth_1987} being classified up to isomorphism by their Elliott invariant (see 
\cite{Win_structure_2018} for an exhaustive bibliography on the matter). 
In the rest of this introduction,
and although this terminology is not standard,
algebras satisfying all these assumptions will be called \emph{classifiable}. 
In view of this recent progress, an important further step is to
identify prominent classes of 
C$^*$-algebras that satisfy the assumptions of the classification
theorem. This problem has attracted
considerable attention in connection to crossed products, 
and most of the work in the literature has focused on actions
of discrete (and usually amenable) groups 
on C$^*$-algebras that are either abelian or simple.
This work deals with the latter setting.

If $\alpha\colon G\to\Aut(A)$ is an action of a discrete group $G$ on a simple, unital C$^*$-algebra $A$, then 
the crossed product $A\rtimes_r G$ is simple whenever $\alpha$ is outer (see \cite{Kishimoto}). If $A$ is moreover nuclear, then $A\rtimes_r G$ is nuclear if and only
if $\alpha$ is amenable (see \cite{AD_systemes_1987}). 
Whether $A\rtimes_rG$ satisfies the UCT is a very subtle question:
the answer is always ``yes'' if $A$ satisfies the UCT, and $G$ is torsion-free with
the Haagerup property (see \cite{MeyerNest, HigsonKasparov}), but the problem is equivalent to the UCT question for torsion groups 
(see \cite[Example~23.15.12~(d)]{Black_K} and \cite{BarSza_cartanII_2020}). Therefore, modulo the UCT, classifiability of $A\rtimes_r G$ for an amenable, outer action on a simple, unital, nuclear C$^*$-algebra reduces to proving $\mathcal Z$-stability
for $A\rtimes_r G$.

The problem of establishing $\mathcal{Z}$-stability for 
$A\rtimes_rG$ when $A$ is simple and nuclear has been largely investigated in the case where $G$ is amenable and $A$ is $\mathcal{Z}$-stable. 
In this case,
$\mathcal{Z}$-stability of $A \rtimes_r G$ is conjectured to always 
hold (see \cite[Conjecture~A]{Sza_Equivariant_2021}), and
this has been verified in full generality if $A$ is purely infinite (\cite{szabo:equivariantKP}), and under various degrees
of generality when $A$ is stably finite and its trace space $T(A)$ is a Bauer simplex with finite dimensional boundary (\cite{matuisato:1, matuisato:2, sato, GHV_2022, Wouters_22}). Further progress has been recently obtained in \cite{ESPA}, where $\mathcal{Z}$-stability
of $A \rtimes_r G$ is obtained in a number of cases where the boundary of $T(A)$ is compact but not necessarily finite 
dimensional. On the other hand, $\mathcal{Z}$-stability may fail 
for actions of nonamenable groups; see \cite[Theorem~B]{GarLup_group_2021}.

In this paper, we study amenable actions of \emph{nonamenable} groups on
simple C$^*$-algebras, inspired by the results in \cite{GGKN22} for commutative $C^*$-algebras. 
Our original motivation was to show that the crossed product
of an outer, amenable action of a nonamenable group 
on a simple, unital, nuclear, 
$\mathcal{Z}$-stable C$^*$-algebra is automatically purely 
infinite and simple. 
For groups containing the free group $F_2$, this follows from 
\autoref{cor:main}.
As it turns out, the assumptions on both the algebra and the 
action can be weakened significantly. For once, we do not need
to assume $A$ to be nuclear or even $\mathcal{Z}$-stable (see
\autoref{cor:main} for the minimal set of assumptions).
More important for this work is the fact that amenability of
$\alpha$ is also stronger than necessary. This observation led
us to identify and isolate the following notion:

\begin{dfnintro}[\autoref{df:TracialAmenAct}]\label{dfintro}
An action $\alpha\colon G\to\Aut(A)$ of a countable, discrete group $G$
on a separable, unital, tracial C$^*$-algebra $A$ is said to be
\emph{tracially amenable} if there is a sequence 
$(\xi_n)_{n\in\N}$ of finitely supported functions 
$\xi_n\colon G\to A$ with $\|\xi_n\|\leq 1$ such that  
\[\lim_{n\to\I} \|\xi_n a-a\xi_n\|_{2,u}=
\lim_{n\to\I}\left\|\langle \xi_n,\xi_n\rangle-1\right\|_{2,u} 
=\lim_{n\to\I}\left\|\widetilde{\alpha}_g(\xi_n)-\xi_n\right\|_{2,u}=0\]
for all $a\in A$ and all $g\in G$.
\end{dfnintro}

In the above definition, we denote by $\widetilde{\alpha}$ the 
diagonal action on $\ell^2(G,A)$ with left translation on $G$ and 
$\alpha$ on $A$, 
and we denote by $\|\cdot\|_{2,u}$ the so-called
\emph{uniform trace norm} on $\ell^2(G,A)$; see the comments before 
\autoref{lma:Cauchy-Schwarz}. 
Thanks to the characterizations obtained in \cite{Ozawa_Suzuki_2020}, the usual notion of amenability is obtained by replacing the 
uniform trace norm by the usual Hilbert C$^*$-module-norm on $\ell^2(G,A)$.
Since $\|\cdot\|_{2,u}\leq \|\cdot \|$, it follows that any amenable action is tracially amenable.

\autoref{S.3} is devoted to obtaining several characterizations
of tracial amenability, inspired by the work done in \cite[Theorem 3.2, Theorem 4,4]{Ozawa_Suzuki_2020}. We reproduce some of them below:

\begin{thmintro}[\autoref{thm:TracAm}]
Let $\alpha\colon G\to \mathrm{Aut}(A)$ be an action of a countable, discrete, exact group $G$ on a separable, unital, tracial C$^*$-algebra $A$.
The following are equivalent: 
\begin{enumerate}
		\item The action $\alpha$ is tracially amenable. \label{item:trac amen}
		\item \label{main:i2} There is a sequence $(\xi_n)_{n\in\N}$ of finitely supported functions 
$\xi_n\colon G\to A$ with $\|\xi_n\|\leq 1$ such that  
\[\lim_{n\to\I}\left\|\langle \xi_n,\xi_n\rangle-1\right\|_{2,u} 
=\lim_{n\to\I}\left\|\widetilde{\alpha}_g(\xi_n)-\xi_n\right\|_{2,u}=0\]
for all $g\in G$.
 \label{item:noncentral}
		\item The induced action $\alpha^{\omega}\colon G\to\mathrm{Aut}(A^\omega\cap \iota(A)')$ on the tracial ultrapower is amenable. \label{item:amenable ultrapower}	
		\item \label{main:i4} The induced action $G\curvearrowright T(A)$ is topologically amenable. \label{item:T(A)}
		\item \label{main:i5} The induced action $G\curvearrowright \overline{\partial_e T(A)}$ is topologically amenable. \label{item:dT(A)}		
		\item The induced action $\alpha_{\mathrm{fin}}^{\ast\ast} \colon G\to \Aut(A\dfin)$ is von Neumann-amenable. \label{item:vN}
	\end{enumerate}
\end{thmintro}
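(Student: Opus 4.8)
The plan is to establish a single cycle of implications, following the template of \cite{Ozawa_Suzuki_2020} but with every estimate carried out in the uniform trace norm $\|\cdot\|_{2,u}$ rather than the operator norm. Concretely, I would prove $(1)\Rightarrow(2)\Rightarrow(4)\Rightarrow(6)\Rightarrow(3)\Rightarrow(1)$ together with the separate equivalence $(4)\Leftrightarrow(5)$. The implication $(1)\Rightarrow(2)$ is a trivial weakening, as (2) is obtained from \autoref{dfintro} by discarding the near-central condition $\|\xi_n a-a\xi_n\|_{2,u}\to 0$; the real content is that this condition can always be recovered, which the cycle achieves by reintroducing it inside a relative commutant in the final step.

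For $(2)\Rightarrow(4)$, given finitely supported $\xi_n\colon G\to A$ with $\|\xi_n\|\le 1$ satisfying the two conditions of (2), I would define for each $\tau\in T(A)$ the finitely supported positive function $f_n^\tau(g)=\tau(\xi_n(g)^\ast\xi_n(g))$ on $G$ and set $m_n^\tau=f_n^\tau/\|f_n^\tau\|_1\in\Prob(G)$. Since $\sum_g f_n^\tau(g)=\tau(\langle\xi_n,\xi_n\rangle)$, the hypothesis $\|\langle\xi_n,\xi_n\rangle-1\|_{2,u}\to 0$ forces $\|f_n^\tau\|_1\to 1$ uniformly in $\tau$ (via the Cauchy--Schwarz bound of \autoref{lma:Cauchy-Schwarz}), so the normalization is harmless for large $n$ and $\tau\mapsto m_n^\tau$ is continuous by weak$^\ast$ continuity of traces on the finitely many relevant matrix coefficients. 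The key point is that $\|\widetilde\alpha_g(\xi_n)-\xi_n\|_{2,u}\to 0$ translates, again through a trace Cauchy--Schwarz estimate comparing $\xi_n(g^{-1}h)$ with $\xi_n(h)$, into $\sup_{\tau}\|g_\ast m_n^\tau-m_n^{g\tau}\|_1\to 0$, which is exactly topological amenability of $G\curvearrowright T(A)$. It is precisely the supremum over $\tau$ built into $\|\cdot\|_{2,u}$ that supplies the uniformity over $T(A)$ demanded here.

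The equivalence $(4)\Leftrightarrow(5)$ is a soft consequence of Choquet theory: $\overline{\partial_e T(A)}$ is a closed $G$-invariant subset, so topological amenability restricts to it, giving $(4)\Rightarrow(5)$; conversely, since $T(A)$ is a Choquet simplex, the representing-measure map $T(A)\to\Prob(\overline{\partial_e T(A)})$ is a continuous $G$-equivariant map (well defined by uniqueness of representing measures) along which topological amenability pulls back, giving $(5)\Rightarrow(4)$. For $(4)\Rightarrow(6)$ I would invoke the standard transfer from topological to von Neumann amenability in the sense of \cite{AD_systemes_1987}: topological amenability of $G\curvearrowright T(A)$ yields von Neumann amenability of the induced action on the finite part of the bidual $A\dfin$, whose center is governed by the measure theory of the trace space.

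The substantial work, and the main obstacle, lies in $(6)\Rightarrow(3)\Rightarrow(1)$, where a genuinely von Neumann (i.e.\ $2$-norm) notion must be upgraded to amenability of the C$^\ast$-dynamical system $\alpha^\omega$ on the tracial relative commutant $A^\omega\cap\iota(A)'$. This is exactly where exactness of $G$ is indispensable, mirroring the fact that von Neumann and topological amenability coincide only for exact groups. Here I would represent the approximately invariant vectors supplied by von Neumann amenability of $A\dfin$ by operator-norm-bounded sequences from $A$ converging in $\|\cdot\|_{2,u}$, and run a reindexing/diagonal argument together with Kaplansky density to produce genuinely almost-invariant, almost-unital vectors $\eta_k\colon G\to A^\omega\cap\iota(A)'$; descending back to $A$ then yields functions $\xi_n\colon G\to A$ satisfying all three conditions of \autoref{dfintro}, the near-central condition being automatic because the $\eta_k$ lie in $\iota(A)'$. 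Controlling the interaction between the bound $\|\xi_n\|\le 1$ and the trace-norm approximations throughout this reindexing is the technical heart of the argument.
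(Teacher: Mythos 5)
Your cycle front-loads the easy implications correctly: (1)$\Rightarrow$(2) is a trivial weakening, your (2)$\Rightarrow$(4) via the normalized measures $m_n^\tau(g)\propto\tau(\xi_n(g)^*\xi_n(g))$ is sound and slightly more hands-on than the paper's route (which instead forms the positive type functions $\theta(g)(\tau)=\tau(\langle\xi,\widetilde{\alpha}_g(\xi)\rangle)$ valued in $C(T(A))$ and invokes Anantharaman-Delaroche's characterization), and (4)$\Leftrightarrow$(5) by restriction and the barycenter map is fine. However, already your (4)$\Rightarrow$(6) is not a ``standard transfer'': the approximately invariant vectors witnessing von Neumann amenability must take values in $Z(A\dfin)$, and the only available bridge from the trace simplex to that center is Ozawa's Dixmier-type theorem \cite[Theorem~3]{Ozawa_Dixmier_2013}, which produces a unital homomorphism $B(\partial_e T(A))\to Z(A\dfin)$ whose equivariance must then be checked via uniqueness of boundary measures; this nontrivial ingredient is exactly why the paper routes this step as (4)$\Rightarrow$(5)$\Rightarrow$(6), composing with $C\left(\overline{\partial_eT(A)}\right)\hookrightarrow B(\partial_e T(A))$.

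The genuine gap is your single arrow (6)$\Rightarrow$(3). Von Neumann amenability supplies approximate invariance only in the \emph{ultraweak} topology, i.e.\ pointwise on (finitely many) normal traces at a time, whereas membership in $A^\omega\cap\iota(A)'$ and amenability of $\alpha^\omega$ demand estimates that are \emph{uniform} over $T(A)$ (this is what both $\|\cdot\|_{2,u}$ and the operator norm of $A^\omega$ encode). A reindexing/diagonal argument along $\omega$ can absorb countably many constraints, but ``uniformly over the compact, generally uncountable set $T(A)$'' is not such a constraint, and Kaplansky density \cite[Lemma~4.5]{BusEchWil22} does not help with it either. The paper's mechanism for this upgrade is a convexity argument: the error terms assemble into continuous affine functions $\widehat{D(\eta)}$ on $T(A)$; pointwise smallness on finite sets of traces places $0$ in the pointwise (hence, by affineness, weak) closure of their set, Hahn--Banach then yields a convex combination $\sum_j\lambda_j\widehat{D(\eta_j)}\leq\varepsilon^2$ \emph{uniformly} on $T(A)$, and the disjoint-support right-translation trick turns this into $D(\xi)\leq\varepsilon^2$ for the single contraction $\xi=\sum_j\lambda_j^{1/2}\eta_j$. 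Your sketch contains no substitute for this step. Moreover, exactness must enter \emph{concretely} somewhere in (6)$\Rightarrow$(3) (condition (3) is an amenable action on a unital C$^*$-algebra and hence forces $G$ exact, while a claim proved only by reindexing would never use exactness); in the paper it enters through the existence of an amenable action $G\curvearrowright X$ on a compact metrizable space \cite[Theorem~5.1.7]{BrownOzawa}, positive type functions $G\to C(X)$, and an equivariant u.c.p.\ map $C(X)\to A^\omega\cap\iota(A)'$ built from tracial amenability. In effect, your arrow (6)$\Rightarrow$(3) would have to factor as (6)$\Rightarrow$(1) (the Hahn--Banach argument) followed by (1)$\Rightarrow$(3) (the map-from-$C(X)$ argument plus a reindexing to restore centrality in the ultrapower); as written, it conceals the two hardest steps of the proof inside one undischarged claim.
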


Some of the above conditions look similar to 
analogous characterizations of amenability on C$^*$-algebras, while some do not admit
a counterpart in that setting. For example,
the difference between \autoref{dfintro} and item \eqref{main:i2} above is that
we do not require approximate centrality in \eqref{main:i2}. Also, \eqref{main:i4} and \eqref{main:i5} do not have analogues in
the setting of amenable actions, since amenability of $G\curvearrowright S(A)$ does not imply amenability of $\alpha$.

We study the structure of crossed products in \autoref{S.4}.
For groups containing the free group $F_2$, we show that
tracially amenable actions give rise to purely infinite crossed products:

\begin{thmintro}[\autoref{cor:main}]\label{thmintro A}
Let $G$ be a countable, discrete group containing the free group $F_2$, let 
$A$ be a simple, separable, unital, stably finite,
nuclear, $\mathcal{Z}$-stable C$^*$-algebra, and 
let $\alpha\colon G\to \Aut(A)$ be a tracially amenable, 
outer action. Then $A\rtimes_r G$ is a unital, simple, purely infinite C$^*$-algebra. 
\end{thmintro}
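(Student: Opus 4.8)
The plan is to verify the three assertions---unitality, simplicity, and pure infiniteness---in turn, with the last carrying essentially all of the content. Unitality is immediate: since $A$ is unital and $G$ is discrete, $1_A\delta_e$ is a unit for $A\rtimes_r G$. Simplicity does not use tracial amenability at all: as $A$ is simple and $\alpha$ is outer, Kishimoto's theorem \cite{Kishimoto} gives that $A\rtimes_r G$ is simple. It therefore remains to prove pure infiniteness.

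As a first step I would show that $A\rtimes_r G$ is \emph{traceless}. Any tracial state $\tau$ on $A\rtimes_r G$ restricts to a tracial state on $A$ which, since $\tau(u_g a u_g^{*})=\tau(a)$ for all $g\in G$, is $G$-invariant; thus a trace on the crossed product produces a $G$-fixed point of $T(A)$, and it suffices to rule out such fixed points. Tracial amenability forces the induced action $G\curvearrowright T(A)$ to be topologically amenable---this is the implication \eqref{item:trac amen}$\Rightarrow$\eqref{main:i4} of \autoref{thm:TracAm}, which does not require exactness of $G$. Topological amenability restricts to closed invariant subsets, and a topologically amenable action on a one-point space forces the group to be amenable; since $F_2\le G$ the group $G$ is nonamenable, so a fixed point would be absurd. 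As $A$ is unital, simple, stably finite and exact (being nuclear), $T(A)\ne\varnothing$, yet it carries no fixed point, so $A$ admits no $G$-invariant trace and $A\rtimes_r G$ has no tracial state.

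The decisive task is to upgrade ``simple and traceless'' to ``purely infinite''. The difficulty is that $A\rtimes_r G$ need not be $\mathcal Z$-stable---indeed $\mathcal Z$-stability routinely fails for actions of nonamenable groups---so the Kirchberg--R\o rdam dichotomy for the crossed product is unavailable, and one must argue by hand that $1\precsim a$ (Cuntz subequivalence) for every nonzero $a\in(A\rtimes_r G)_{+}$. I would proceed in two stages. First, a standard averaging and cutting argument, using simplicity of $A$, exactness of $G$ (which follows from the amenable action on the nonempty compact space $T(A)$), and faithfulness of the canonical conditional expectation $E\colon A\rtimes_r G\to A$, produces a nonzero $d\in A_{+}$ with $d\precsim a$ in $A\rtimes_r G$, reducing matters to coefficients in $A$. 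Second---where the dynamics enter---I would show $1\precsim d$ for every nonzero $d\in A_{+}$: writing $v_g=u_g d^{1/2}$ one has $v_g^{*}v_g=d$ and $v_gv_g^{*}=\alpha_g(d)$, so that a free subgroup of $G$ supplies many group elements $g_1,\dots,g_n$ whose associated copies of $d$ have sufficiently independent supports $\alpha_{g_i}(d)$. Using amenability of $G\curvearrowright\overline{\partial_e T(A)}$ (\autoref{thm:TracAm}\eqref{main:i5}, obtained by restricting the amenable action on $T(A)$ to this closed invariant set) together with the absence of invariant traces, one arranges the relevant dynamical dimension functions to be uniformly small; strict comparison in $A$---the point at which $\mathcal Z$-stability of $A$ is used---then converts these estimates into actual subequivalences in $A$, which assemble inside $A\rtimes_r G$ to yield $1\precsim d$.

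I expect this second stage to be the main obstacle. In the absence of $\mathcal Z$-stability of the crossed product, the crux is to set up a genuinely \emph{dynamical} comparison property that fuses the strict comparison of $A$, the amenability of the boundary action $G\curvearrowright\overline{\partial_e T(A)}$, and the paradoxical decomposition coming from $F_2$ into the single inequality $1\precsim d$; controlling the interplay of these three ingredients, and in particular the approximate independence of the ranges $\alpha_{g_i}(d)$ inside the reduced crossed product, is where the real work lies. By contrast, the preparatory steps---unitality, simplicity, and traceless-ness---are comparatively soft.
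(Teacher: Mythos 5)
Your skeleton matches the paper's proof in outline: simplicity via Kishimoto's theorem \cite{Kishimoto}; absence of invariant traces from amenability of $G\curvearrowright T(A)$ plus nonamenability of $G$ (this is exactly \autoref{cor:NoInvariantTraces}, which rests on \cite[Lemma~2.2]{GGKN22}); reduction of pure infiniteness to proving $1\precsim d$ for every nonzero $d\in A_+$ via \cite[Lemma~3.2]{RordamSierakowski}; and a final ``dynamical comparison'' step combining strict comparison of $A$ with paradoxicality coming from $F_2$ and with (tracial) amenability. The paper packages the last two steps as \autoref{prop:CompPurInf} (dynamical strict comparison plus no invariant quasitraces implies purely infinite) and \autoref{thm:Mainthm} (tracial amenability plus weak paradoxical towers implies dynamical strict comparison).

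However, your treatment of the decisive step is a genuine gap, and the one concrete idea you offer for it would not work. You propose to compare $1$ against translates $\alpha_{g_i}(d)$ of $d$ (via $v_g=u_gd^{1/2}$), hoping that a free subgroup makes the supports of the $\alpha_{g_i}(d)$ ``sufficiently independent''. But for an arbitrary nonzero $d\in A_+$ nothing in the hypotheses (outerness, tracial amenability) gives any control on the overlaps of the elements $\alpha_{g_i}(d)$ themselves; the translates of $d$ can be very far from independent in any Cuntz-theoretic sense. The paper's mechanism is different: one never translates $d$ (called $a$ there). Instead, tracial amenability produces $\xi\in C_c(G,A)$ with $\|\xi\|\le 1$, $\|\langle\xi,\xi\rangle-1\|_{2,u}$ small, and $\|\widetilde{\alpha}_g(\xi)-\xi\|_{2,u}$ small on a prescribed finite set, and this is fused with the weak paradoxical tower data $(D,K_1,\dots,K_n,g_1,\dots,g_n)$ of \autoref{df:WeakParTwrs} by setting $c_j=\sum_{g\in K_j}\xi(g)^*\xi(g)$ and $b_j=\big(c_j-\tfrac{1}{2n}\big)_+$. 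The covering $\bigcup_j g_jK_j=G$ plus approximate invariance forces $\sum_j d_\tau\big(\alpha_{g_j}((b_j-\ep)_+)\big)>\tfrac{1}{3n}$ uniformly in $\tau$, whence $1\precsim\oplus_j\alpha_{g_j}((b_j-\ep)_+)^{\oplus 3n}$ by strict comparison in $A$; the disjointness of $\{dK_j\}_{d\in D}$ forces the sum over $d\in D$ of the ranks of the translates $\alpha_d((b_j-\ep)_+)$ to be at most $2n^2+1$, so the average is small once $|D|$ is large. Moreover, converting this averaged bound into the subequivalence $\oplus_j(b_j-\ep)_+^{\oplus 3n}\precsim_G a$ needs an ingredient entirely absent from your sketch: one must realize the function $\tfrac{3n}{|D|-1}\,\mathrm{rk}\big(\alpha_d((b-\ep)_+)\big)$ as the rank of an actual element of $M_\infty(A)_+$ (``all ranks are realized'', which holds here by \cite[Theorem~8.11]{ThielSR1} since $\mathcal Z$-stability gives stable rank one), or some divisibility substitute. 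Without the construction of the $b_j$ from $\xi$ and the towers, and without the rank-realization step, your statement that ``one arranges the relevant dynamical dimension functions to be uniformly small'' is an unproved assertion; it is precisely the content of \autoref{thm:Mainthm}.
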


As mentioned before, the requirements on $A$ can be weakened, and 
we in particular do not need to assume $A$ to be either nuclear
or $\mathcal{Z}$-stable (see the statement of \autoref{cor:main} for the precise assumptions on $A$). The condition on the group can also be 
relaxed, and it suffices to assume that $G$ has what we call
\emph{weak paradoxical towers}; see \autoref{df:WeakParTwrs}. 
To obtain \autoref{thmintro A}, we show in \autoref{prop:CompPurInf}
that it suffices to
prove that any action as in the statement satisfies what 
Bosa, Perera, Wu and Zacharias call \emph{dynamical 
strict comparison} (\autoref{df:DynCom}). That this is the case 
is shown in \autoref{thm:Mainthm}, by exploiting the tension 
between tracial amenability of $\alpha$ and the existence of 
weak paradoxical towers in $G$. 

\autoref{thmintro A} is new even if $\alpha$ is amenable. In this
setting, $A\rtimes_r G$ is nuclear and therefore a Kirchberg algebra,
so in particular $\mathcal{O}_\infty$-stable (and, therefore, also $\mathcal{Z}$-stable).
If, moreover, $G$ is torsion-free and has the Haagerup property 
(for example, $G=F_n$; see \cite[Definition 12.2.1]{BrownOzawa}),
and $A$ satisfies the UCT,
then $A\rtimes_r G$ also satisfies the UCT and thus is completely determined by its $K$-theory by \cite{Phi_classification_2000}; see \autoref{cor:GrpsContF2}. 
Thus, the assumptions of outerness and amenability on $\alpha$ do in fact 
guarantee classifiability of the crossed product. There is, however,
a drawback:

\begin{pbmintro} \label{prob:nasf}
Are there any amenable actions of nonamenable groups on simple, unital, stably finite C$^*$-algebras?
\end{pbmintro}

The above problem has recently 
attracted a fair amount of attention, and 
to the best of our knowledge it remains open. 
If one drops unitality of the algebra, an example
has been constructed in \cite{Suzuki_Every_2022}.

The fact that \autoref{prob:nasf} remains open highlights another
advantage of focusing on \emph{tracially} amenable actions throughout: 
namely, we can construct several examples of actions satisfying
the assumptions of \autoref{thmintro A}:

\begin{egintro}[\autoref{eg:TracAmenEll}; \autoref{prop:ExOnAFAlg}]\label{egintro}
For every $n\geq 2$, 
there exist outer, tracially amenable actions of the free group 
$F_n$ on stably finite,
classifiable C$^*$-algebras, including actions on simple, unital
AF-algebras.
\end{egintro}

We do not know if the 
actions that we construct here are amenable.

\subsection*{Acknowledgements}
We are indebted to Sam Evington for his help with the proof of \autoref{lma:Sam}, to Thierry Giordano, to Mikael R{\o}rdam for suggesting \autoref{prop:ExOnAFAlg}, and to Gábor Szabó and Stuart White for helpful comments throughout the development of this project.
We are very grateful to the referee for their thorough reading of the manuscript as well as for the numerous helpful suggestions and corrections.
The first and the second author would like to thank the University of M\"unster for the hospitality during the visit in which part of this research was done.
The third author would like to thank the Chalmers University of Technology and the University of Gothenburg for their hospitality during his stay there.

\section{Tracially amenable actions} \label{S.3}

Amenable actions of discrete groups on C$^*$-algebras were introduced in \cite{AD_systemes_1987} and later extended to locally compact groups in \cite{BusEchWil22}. In this section, we define a tracial analogue of amenability for actions of discrete groups (\autoref{df:TracialAmenAct}). The motivation to study this notion is two-fold. 
On the one hand, tracial amenability allows us 
to obtain strong structural results for the crossed product;
see Subsection~\ref{S.4}. On the other hand, and unlike for amenable actions, 
it is possible to construct many tracially amenable actions of nonamenable groups on unital, simple, stably finite C$^*$-algebras, by means of classification results (see Subsection~\ref{ss.classi}).

Fix a unital C$^*$-algebra $A$ and a discrete group $G$.
For an action $\alpha\colon G\to \mathrm{Aut}(A)$, we denote by $\ell^2(G,A)$ the completion of $C_c(G,A)$ with respect to the norm
$\|\cdot\|$ induced by the $A$-valued inner product given by
\[\langle \xi,\eta\rangle =\sum_{g\in G}\xi(g)^*\eta(g)\] 
for all $\xi,\eta\in C_c(G,A)$.
We equip $\ell^2(G,A)$ with the $A$-bimodule structure given by pointwise multiplication and the diagonal $G$-action $\widetilde{\alpha}$ given by $\widetilde{\alpha}_g(\xi)(h)= \alpha_g(\xi(g^{-1}h))$, for 
all $\xi\in \ell^2(G,A)$, and all $g,h\in G$.

Recall that a tracial state (often called simply ``trace'' in this work)
on $A$ is a continuous linear functional $\tau\colon A\to \mathbb{C}$
satisfying $\tau(1)=1$, $\tau(A_+)\subseteq [0,\infty)$ and $\tau(ab)=\tau(ba)$
for all $a,b\in A$.
We write $T(A)$ for the compact convex space of all traces on $A$.
Given a trace $\tau\in T(A)$ and $a \in A$, we denote by $\|a\|_{2,\tau}= \tau(a^*a)^{\frac 1 2}$ the associated \emph{$2$-seminorm} on $A$. We also denote by $\|\cdot \|_{2,\tau}$ the induced seminorm on $\ell^2(G,A)$, given by 
$\|\xi\|_{2,\tau}= \tau(\langle \xi,\xi\rangle)^{\frac{1}{2}}$,
for all $\xi\in \ell^2(G,A)$.
In order to see that this is indeed a seminorm, note that $\|\xi\|_{2,\tau}$ agrees with the norm of the element $\xi\otimes \xi_\tau\in \ell^2(G,A)\otimes_{\pi_\tau}H_\tau$ where $(\pi_\tau,H_\tau,\xi_\tau)$ is the GNS construction of $\tau$. 
The \emph{uniform $2$-seminorm} on $A$ (respectively, on $\ell^2(G,A)$) is given by $\|\cdot\|_{2,u}= \sup_{\tau\in T(A)}\|\cdot\|_{2,\tau}$. 
The following observation, which is a variant of the Cauchy-Schwarz 
inequality, will be used repeatedly.

\begin{lma}\label{lma:Cauchy-Schwarz}
Let $A$ be a unital C$^*$-algebra and let $\tau\in T(A)$.
Then:
\be\item For every $a\in A$, we have
$|\tau(a)|\leq \|a\|_{2,\tau}\leq \|a\|$. 
\item For every $\xi,\eta\in \ell^2(G,A)$ we have  $\|\langle \xi,\eta\rangle\|_{2,\tau}\leq \|\xi\|\|\eta\|_{2,\tau}\leq \|\xi\|\|\eta\|$.
\ee 
\end{lma}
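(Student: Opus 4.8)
The plan is to handle the two parts essentially independently, with part~(2) relying on the scalar inequality from part~(1) together with the operator-valued Cauchy--Schwarz inequality valid in any Hilbert C$^*$-module.

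For part~(1), I would first prove the right-hand inequality $\|a\|_{2,\tau}\leq\|a\|$. Since $a^*a\leq\|a^*a\|\,1=\|a\|^2\,1$ and $\tau$ is positive and unital, applying $\tau$ gives $\|a\|_{2,\tau}^2=\tau(a^*a)\leq\|a\|^2$, and taking square roots yields the claim. For the left-hand inequality $|\tau(a)|\leq\|a\|_{2,\tau}$, I would observe that $(b,c)\mapsto\tau(b^*c)$ is a positive semidefinite sesquilinear form on $A$ (this is precisely the GNS form of $\tau$), so the ordinary Cauchy--Schwarz inequality applies to it. Writing $\tau(a)=\tau(1^*a)$ and using $\tau(1)=1$, this gives
\[|\tau(a)|\leq\tau(1^*1)^{1/2}\,\tau(a^*a)^{1/2}=\|a\|_{2,\tau}.\]

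For part~(2), the key input is the Hilbert C$^*$-module Cauchy--Schwarz inequality, which states that for $\xi,\eta\in\ell^2(G,A)$ one has the operator inequality
\[\langle\xi,\eta\rangle^*\langle\xi,\eta\rangle\leq\|\langle\xi,\xi\rangle\|\,\langle\eta,\eta\rangle=\|\xi\|^2\,\langle\eta,\eta\rangle\]
in the positive cone of $A$. Since $\tau$ is positive, hence order-preserving, applying it to both sides gives
\[\|\langle\xi,\eta\rangle\|_{2,\tau}^2=\tau\big(\langle\xi,\eta\rangle^*\langle\xi,\eta\rangle\big)\leq\|\xi\|^2\,\tau\big(\langle\eta,\eta\rangle\big)=\|\xi\|^2\,\|\eta\|_{2,\tau}^2,\]
and taking square roots yields the first inequality $\|\langle\xi,\eta\rangle\|_{2,\tau}\leq\|\xi\|\,\|\eta\|_{2,\tau}$. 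The second inequality $\|\eta\|_{2,\tau}\leq\|\eta\|$ is then immediate, and in fact follows by the same computation as the right-hand inequality of part~(1): since $\langle\eta,\eta\rangle$ is positive with $\langle\eta,\eta\rangle\leq\|\langle\eta,\eta\rangle\|\,1$ and $\tau$ is a state, we get $\|\eta\|_{2,\tau}^2=\tau(\langle\eta,\eta\rangle)\leq\|\langle\eta,\eta\rangle\|=\|\eta\|^2$, where the last equality is the definition of the Hilbert module norm.

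Since all the ingredients are standard, there is no serious obstacle here; the only point requiring a little care is invoking the \emph{operator-valued} Cauchy--Schwarz inequality for Hilbert C$^*$-modules in part~(2), rather than a naive scalar one, as it is this $A$-valued inequality that, after applying the order-preserving functional $\tau$, produces the desired bound in the uniform trace seminorm. Alternatively, one could prove part~(2) by realizing $\|\cdot\|_{2,\tau}$ as the Hilbert space norm on the interior tensor product $\ell^2(G,A)\otimes_{\pi_\tau}H_\tau$, as indicated before the statement, and estimating the relevant vectors there; but the order-theoretic argument above seems most direct.
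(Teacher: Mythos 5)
Your proof is correct and follows essentially the same route as the paper: the key step in both is the operator-valued Cauchy--Schwarz inequality $\langle\xi,\eta\rangle^*\langle\xi,\eta\rangle\leq\|\langle\xi,\xi\rangle\|\,\langle\eta,\eta\rangle$ for Hilbert C$^*$-modules (Lance, Proposition~1.1), followed by applying $\tau$ and taking square roots. You have merely spelled out the parts the paper declares immediate, namely part~(1) via the GNS form and the estimate $\tau(\langle\eta,\eta\rangle)\leq\|\langle\eta,\eta\rangle\|$.
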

Part~(1) is immediate. For part (2), 
by \cite[Proposition~1.1]{Lance} we have 
\[\langle \xi,\eta\rangle^*\langle \xi,\eta\rangle \leq \|\langle \xi,\xi\rangle \|\langle \eta,\eta\rangle.\]
Applying $\tau$ and taking the square roots gives the first inequality,
while the second one is immediate.

The following definition is inspired by the \emph{quasicentral approximation property} (QAP) from \cite[Definition 3.1]{BusEchWil20},
which was shown to be equivalent to amenability for actions on C$^*$-algebras in \cite[Theorem~3.2]{Ozawa_Suzuki_2020}. In view of this
equivalence, we directly present the tracial analogue of \cite[Definition 3.1]{BusEchWil20} under the name of tracial amenability.

\begin{df}\label{df:TracialAmenAct}
An action $\alpha\colon G\to \Aut(A)$ of a discrete group $G$ on a unital, C$^*$-algebra $A$ with $T(A)\neq \emptyset$ 
is called \emph{tracially amenable} if
for all finite subsets $F \subseteq A$ and $K \subseteq G$, and 
for every $\varepsilon > 0$, there exists
$\xi \in C_c(G,A)$ satisfying 
\begin{enumerate}
		\item $\left\|\xi \right\|\leq 1$ (in the Hilbert C$^*$-module-norm of $\ell^2(G,A)$); \label{item:dfTA contr}
		\item $\left\|\xi a-a \xi\right\|_{2,u} < \varepsilon$, for all $a\in F$; \label{item:dfTA comm}
		\item $\left\|\langle \xi,\xi\rangle-1\right\|_{2,u} < \varepsilon$; \label{item:dfTA 1}
		\item $\left\|\widetilde{\alpha}_g(\xi)-\xi\right\|_{2,u} < \varepsilon$, for all $g\in K$. \label{item:dfTA inv}
	\end{enumerate}
\end{df}

\begin{rem} \label{rem:alphathings}
	Note that conditions \eqref{item:dfTA 1} and \eqref{item:dfTA inv} in \autoref{df:TracialAmenAct} can be equivalently replaced by the condition $\|\langle \xi,\widetilde{\alpha}_g(\xi)\rangle -1\|_{2,u}<\varepsilon$ for all $g\in K$.
	If $G$ is countable and discrete, and $A$ is separable, \autoref{df:TracialAmenAct} is furthermore equivalent to the existence of a sequence $(\xi_n)_{n\in \N}$ in $C_c(G,A)$ such that the terms in items \eqref{item:dfTA comm}-\eqref{item:dfTA inv} converge to zero along $n\to \infty$, for every $a\in A$ and $g\in G$.
\end{rem}

The definition of the QAP in \cite[Definition 3.1]{BusEchWil20} differs from \autoref{df:TracialAmenAct}
in that all estimates are formulated in the C$^*$-norm, rather than in the tracial seminorm. Since the C$^*$-norm dominates $\|\cdot\|_{2,u}$, 
it follows from \cite[Theorem~3.2]{Ozawa_Suzuki_2020} that every amenable action is tracially amenable. 

Our next goal is to obtain 
several characterizations of tracial amenability; see 
\autoref{thm:TracAm}. For this, we need some preparation.
Given an action $\alpha\colon G\to \mathrm{Aut}(A)$, a function $\theta\colon G\to A$ is said to be of \emph{positive type} with respect to $\alpha$ if for every finite set $K\subseteq G$, the matrix $(\alpha_g(\theta(g^{-1}h)))_{g,h\in K}\in M_{|K|}(A)$ is positive. 
We reproduce here two results from \cite{AD_systemes_1987} in a way
that will be more convenient for us later.
The center of a C$^*$-algebra $A$ is denoted $Z(A)$.

\begin{thm}\cite[Theorem~3.3, Theorem~4.9]{AD_systemes_1987}\label{thm:positive type}
	Let $\alpha\colon G\to \Aut(A)$ be an action of a discrete group $G$ 
	on a unital C$^*$-algebra $A$. Suppose that for every finite subset $K\subseteq G$ and every $\varepsilon>0$, there exists a finitely supported positive type function $\theta\colon G\to Z(A)$ (with respect to $\alpha$) satisfying $\theta(e)\leq 1$ and $\|\theta(g)-1\|<\varepsilon$ for all $g\in K$. 	
	Then $\alpha$ is amenable. If $A$ is commutative, the converse holds as well. 
\end{thm}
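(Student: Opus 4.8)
The plan is to run the whole argument through the equivalence between amenability and the quasicentral approximation property (QAP) established in \cite{Ozawa_Suzuki_2020}, and to build an explicit dictionary between finitely supported positive type functions valued in $Z(A)$ and the approximately invariant unit vectors witnessing the QAP in $\ell^2(G,A)$. The easy half of the dictionary assigns to a vector $\xi\in C_c(G,A)$ with $\|\xi\|\leq 1$ the finitely supported function $\theta_\xi\colon G\to A$ given by $\theta_\xi(g)=\langle \xi,\widetilde{\alpha}_g(\xi)\rangle$. A direct computation using that $\widetilde{\alpha}$ is compatible with the inner product gives $\alpha_g(\theta_\xi(g^{-1}h))=\langle \widetilde{\alpha}_g(\xi),\widetilde{\alpha}_h(\xi)\rangle$, so the matrix $(\alpha_g(\theta_\xi(g^{-1}h)))_{g,h\in K}$ is a Gram matrix and hence positive; thus $\theta_\xi$ is of positive type, with $\theta_\xi(e)=\langle\xi,\xi\rangle\leq 1$.

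For the converse (the commutative case) I would start from amenability, invoke \cite{Ozawa_Suzuki_2020} to obtain QAP vectors $\xi_n\in C_c(G,A)$, and set $\theta_n:=\theta_{\xi_n}$. Each $\theta_n$ is finitely supported and of positive type with $\theta_n(e)\leq 1$, and by Cauchy--Schwarz for Hilbert $A$-modules one estimates $\|\theta_n(g)-1\|\leq \|\xi_n\|\,\|\widetilde{\alpha}_g(\xi_n)-\xi_n\|+\|\langle\xi_n,\xi_n\rangle-1\|\to 0$, so the $\theta_n$ approximate $1$ uniformly on finite sets. Since $A$ is commutative we have $Z(A)=A$, so the $\theta_n$ take values in the center automatically, which is precisely what is required. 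This is the only point where commutativity enters: for noncommutative $A$ the function $\theta_\xi$ lands in $A$ but need not land in $Z(A)$.

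For the forward implication I would pass through a covariant GNS/Kolmogorov construction. Given a finitely supported positive type $\theta\colon G\to Z(A)$ with $\theta(e)\leq 1$ and $\|\theta(g)-1\|<\varepsilon$ on a finite set $K\ni e$, the kernel $k(g,h)=\alpha_g(\theta(g^{-1}h))$ is positive and $\alpha$-covariant in the sense that $k(sg,sh)=\alpha_s(k(g,h))$. Completing $C_c(G,A)$ in the $A$-valued form $\langle\xi,\eta\rangle_\theta=\sum_{g,h}\xi(g)^*k(g,h)\eta(h)$ yields a Hilbert $A$-module $\mathcal{E}_\theta$ carrying a covariant representation $W$ and a cyclic vector $v$ (the class of $\delta_e\cdot 1$) with $\langle v,W_g v\rangle_\theta=\theta(g)$ and $\langle v,v\rangle_\theta=\theta(e)\leq 1$. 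Expanding $\langle W_gv-v,W_gv-v\rangle_\theta$ and using covariance together with $\theta(e)\approx 1$ and $\theta(g)\approx 1$ shows $\|W_gv-v\|_\theta\to 0$ on $K$, so $v$ is an approximately invariant near-unit vector in $\mathcal{E}_\theta$.

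The main obstacle is the final transfer step: converting this abstract vector $v\in\mathcal{E}_\theta$ into approximately invariant unit vectors inside $\ell^2(G,A)$ itself, as the QAP demands. This is exactly where the finite support of $\theta$ is indispensable. Because $\theta$ is finitely supported, its matrix coefficients lie in the analogue of the Fourier algebra of the system, so $W$ is weakly contained in a finite amplification of the regular representation $\widetilde{\alpha}$ on $\ell^2(G,A)$; concretely, the banded structure of $k$ (it is nonzero only when $g^{-1}h\in\supp\theta$) lets one realize $k(g,h)=\langle\widetilde{\alpha}_g(\zeta),\widetilde{\alpha}_h(\zeta)\rangle$ for a genuine $\zeta\in\ell^2(G,A)^{\oplus n}$ rather than an element of an abstract completion. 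Transporting $v$ along this realization produces approximately invariant unit vectors in a finite multiple of $\ell^2(G,A)$, which after a routine cutting-down give the required vectors in $\ell^2(G,A)$, whence $\alpha$ is amenable by \cite{Ozawa_Suzuki_2020} (the approximate centrality in the definition of the QAP being automatic and not needing to be arranged by hand). I expect verifying this weak-containment/realization step — the $C^*$-algebraic counterpart of the elementary fact that finitely supported positive-definite functions on a group are coefficients of its regular representation — to be the technically delicate heart of the argument.
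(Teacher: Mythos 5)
Your commutative half is fine: the Gram-matrix argument for positive type, the Cauchy--Schwarz estimate $\|\theta_{\xi_n}(g)-1\|\leq \|\xi_n\|\|\widetilde{\alpha}_g(\xi_n)-\xi_n\|+\|\langle \xi_n,\xi_n\rangle -1\|$, and the observation that commutativity is what makes the values central are all correct. The genuine gap is in the forward direction, and it is precisely your parenthetical claim that ``the approximate centrality in the definition of the QAP \[is\] automatic and not needing to be arranged by hand.'' That claim is false, and the paper itself records the counterexample (\autoref{rem:AmenSA}): if $G\curvearrowright X$ is an amenable action of a \emph{nonamenable} group and $\beta=\mathrm{Ad}(u)$ is the associated inner action on $B=C(X)\rtimes_r G$, then the vectors $\zeta_n\in C_c(G,C(X))\subseteq C_c(G,B)$ witnessing amenability of $G\curvearrowright X$ satisfy $\|\zeta_n\|\leq 1$, $\langle \zeta_n,\zeta_n\rangle =1$ and $\|\widetilde{\beta}_g(\zeta_n)-\zeta_n\|\to 0$ (the inclusion $C(X)\subseteq B$ is equivariant), yet $\beta$ is not amenable. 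So approximately invariant, nearly-unit vectors in $\ell^2(G,A)$ --- which is all your transfer step would produce --- do \emph{not} imply amenability, and \cite{Ozawa_Suzuki_2020} cannot be invoked. The structural symptom is that your forward argument never uses the hypothesis $\theta(G)\subseteq Z(A)$; since the statement fails without it (the coefficients $g\mapsto\langle \zeta_n,\widetilde{\beta}_g(\zeta_n)\rangle$ above are $C(X)$-valued, and $C(X)\not\subseteq Z(B)$), no proof that ignores it can be complete. A secondary weakness is that the realization step you call the ``delicate heart'' is left as an expectation rather than proved; it is true --- it is \cite[Proposition~2.5]{AD_systemes_1987}, which the paper uses elsewhere --- but as written it is a placeholder.

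The repair uses centrality of the values exactly once, and it also removes the amplification/weak-containment detour entirely. Define $T\in\mathcal{L}_A(\ell^2(G,A))$ by $(T\xi)(g)=\sum_{h}\alpha_g(\theta(g^{-1}h))\xi(h)$: finite support of $\theta$ makes $T$ bounded, positive type gives $T\geq 0$, covariance of the kernel gives $T\widetilde{\alpha}_g=\widetilde{\alpha}_g T$, and --- the key point --- since every kernel entry $\alpha_g(\theta(g^{-1}h))$ lies in $Z(A)$, the operator $T$ (hence $T^{1/2}$) also commutes with the left multiplication action of $A$. Consequently $\eta\coloneqq T^{1/2}\delta_e$ satisfies $\langle \eta,\widetilde{\alpha}_g(\eta)\rangle=\theta(g)$, $\langle\eta,\eta\rangle=\theta(e)\leq 1$, and $a\eta=\eta a$ for all $a\in A$ \emph{exactly}; truncating $\eta$ to a large finite subset of $G$ preserves exact commutation and changes the coefficients by at most $\varepsilon$, giving genuine QAP vectors (in the coefficient form of \autoref{rem:alphathings}, with the C$^*$-norm), whence amenability by \cite[Theorem~3.2]{Ozawa_Suzuki_2020}. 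For comparison, the paper offers no proof at all: the statement is quoted from \cite[Theorem~3.3, Theorem~4.9]{AD_systemes_1987}, where the forward direction amounts to viewing $\theta$ as a positive type function into $Z(A^{\ast\ast})\supseteq Z(A)$ and applying the von Neumann theorem to the bidual action --- there, too, it is centrality of the values that does the work your outline omits.
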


Given a unital C$^*$-algebra $A$, its tracial state space $T(A)$ is a convex compact topological space, and we
denote by $\partial_e T(A)$ the set of its extremal points. 

For a fixed free ultrafilter $\omega\in \beta \N\setminus \N$, we denote by $A^\omega$ the quotient of $\ell^\infty(\N,A)$ by the ideal of sequences $(a_n)_{n\in \N}$ satisfying $\lim_{n\to \omega}\|a_n\|_{2,u}=0$. This is the \emph{tracial ultrapower} of $A$. There exists a natural unital
homomorphism $\iota \colon A \to A^\omega$, which maps each element in $A$ to the class of the corresponding constant sequence. Note
that the map $\iota$ is not necessarily injective, unless traces separate positive elements (which is always the case if $A$ is simple and tracial).
We denote by $A^\omega \cap \iota(A)'$ the 
%\emph{uniform-tracial central sequence algebra} 
C$^*$-algebra consisting of all elements in $A^\omega$ that commute with 
(the images of) all constant sequences.
Any action $\alpha \colon G \to \text{Aut}(A)$ canonically induces 
an action on $A^\omega$ and also on $A^\omega \cap \iota(A)'$,
by acting coordinatewise. With a slight abuse of notation, we shall denote both actions by $\alpha^\omega$.

The \emph{finite part} $A\dfin$ of the bidual $A\dd$ of a C$^*$-algebra $A$ can be identified with the weak closure of $A$ with respect to the sum of all GNS-representations for all traces. Namely,
	\[A\dfin= \left(\bigoplus\nolimits_{\tau\in T(A)}\pi_\tau\right)(A)''\subseteq \mathcal B\left(\bigoplus\nolimits_{\tau\in T(A)}H_\tau\right).\]
Every action $\alpha$ on $A$ canonically extends
to an action $\alpha_{\mathrm{fin}}^{\ast\ast}$ on $A\dfin$.

An action $\alpha\colon G\to \Aut(M)$ of a discrete group $G$ on a von  Neumann algebra $M$ is called \emph{von Neumann-amenable} if 
there exists a net $(\xi_\lambda)_{\lambda\in\Lambda}$ in $C_c(G,Z(M))$ such that $\langle \xi_\lambda, \xi_\lambda\rangle=1$, for all $\lambda\in \Lambda$, and the net $(\langle \xi_\lambda,\widetilde{\alpha}_g(\xi_\lambda)\rangle)_{\lambda\in\Lambda}$ converges to $1\in Z(M)$ pointwise on $G$ in the ultraweak topology (see \cite[Theorem~1.1]{BeardenCrann}).

In the following result we give several conditions that are equivalent
to tracial amenability of an action.
Some of the conditions below look similar to 
analogous characterizations of amenability obtained in (and, indeed, motivated by) \cite[Theorem 3.2, Theorem 4,4]{Ozawa_Suzuki_2020}. Others do not have
a counterpart in that setting and reveal phenomena that are very special to tracial amenability.
 We discuss this in more detail
after the proof of \autoref{thm:TracAm}; see also \autoref{rem:AmenSA}.

\begin{thm}\label{thm:TracAm}
Let $\alpha\colon G\to \mathrm{Aut}(A)$ be an action of a discrete group $G$ on a unital, separable C$^*$-algebra $A$ with $T(A)\neq\emptyset$. Consider the following conditions:
\begin{enumerate}
\item The action $\alpha$ is tracially amenable. \label{item:trac amen}
\item For $\varepsilon>0$ and any finite set $K\subseteq G$, there exists $\xi\in C_c(G,A)$ such that 
\begin{enumerate}
\item $\|\xi\|\leq 1$;
\item $\|\langle \xi,\xi\rangle -1\|_{2,u}<\varepsilon$;
\item $\|\widetilde{\alpha}_g(\xi)-\xi\|_{2,u}<\varepsilon$ for all $g\in K$.
\end{enumerate} \label{item:noncentral}
\item For any separable, unital C$^*$-algebra $C$ with an action $\gamma \colon G \to \mathrm{Aut}(C)$, there is an equivariant, unital, completely positive map $\Phi\colon (C,\gamma)\to (A^\omega\cap \iota(A)',\alpha^\omega)$. \label{item:map from C} 
\item There is a sequence of 
functions $\theta_n\colon G\to A^\omega\cap \iota(A)'$ of finite support and of positive type with respect to $\alpha^\omega$, which satisfy $\theta_n(e)\leq 1$ for all $n\in\mathbb{N}$, and $\lim_{n \to \infty}\|\theta_n(g)-1\| = 0$, for all $g\in G$. 	\label{item:positive type}
\item The induced action $\alpha^{\omega}\colon G\to\mathrm{Aut}(A^\omega\cap \iota(A)')$ is amenable. \label{item:amenable ultrapower}	
\item The induced action $G\curvearrowright T(A)$ is topologically amenable. \label{item:T(A)}
\item The induced action $G\curvearrowright \overline{\partial_e T(A)}$ is topologically amenable. \label{item:dT(A)}		
\item The induced action $\alpha_{\mathrm{fin}}^{\ast\ast} \colon G\to \Aut(A\dfin)$ is von Neumann-amenable. \label{item:vN}
\end{enumerate}
Then \eqref{item:trac amen} $\Leftrightarrow$ \eqref{item:noncentral} $\Leftrightarrow$ \eqref{item:T(A)} $\Leftrightarrow$ \eqref{item:dT(A)} $\Leftrightarrow$ \eqref{item:vN}. If moreover $G$ is countable and exact then all of the above conditions are equivalent. 
\end{thm}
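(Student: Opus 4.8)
The plan is to prove the unconditional equivalences
$(1)\Leftrightarrow(2)\Leftrightarrow(6)\Leftrightarrow(7)\Leftrightarrow(8)$ by closing the cycle
$(1)\Rightarrow(2)\Rightarrow(6)\Rightarrow(7)\Rightarrow(8)\Rightarrow(1)$,
and then, under the standing assumption that $G$ is countable and exact, to append the ultrapower conditions $(3),(4),(5)$ to this equivalence class. The implication $(1)\Rightarrow(2)$ is immediate: condition $(2)$ is obtained from \autoref{df:TracialAmenAct} by discarding the approximate-centrality requirement. The remaining arrows carry the content.

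For $(2)\Rightarrow(6)$ --- the conceptual heart --- I would translate the tracial approximations into honest approximately invariant probability measures on the simplex $T(A)$. Given $\xi\in C_c(G,A)$ as in $(2)$, I associate to each $\tau\in T(A)$ the finitely supported function $\mu^\tau(g)=\tau(\xi(g)^*\xi(g))\ge 0$ on $G$. For fixed $g$ the assignment $\tau\mapsto\mu^\tau(g)$ is affine and weak$^*$-continuous, and $\sum_{g}\mu^\tau(g)=\tau(\langle\xi,\xi\rangle)$ lies within $\varepsilon$ of $1$ uniformly in $\tau$ by condition (b); after normalization this yields a weak$^*$-continuous map $T(A)\to\mathrm{Prob}(G)$. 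The identity $\tau(\langle\widetilde{\alpha}_g(\xi),\widetilde{\alpha}_g(\xi)\rangle)=(g^{-1}\cdot\tau)(\langle\xi,\xi\rangle)$, together with \autoref{lma:Cauchy-Schwarz}, shows that the $\ell^1$-distance between the translate $g_*\mu^\tau$ and $\mu^{g\cdot\tau}$ is controlled by $\|\widetilde{\alpha}_g(\xi)-\xi\|_{2,u}$ uniformly in $\tau$. This is precisely approximate equivariance, so $G\acton T(A)$ is topologically amenable.

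The arrow $(6)\Rightarrow(7)$ is the standard fact that topological amenability passes to closed invariant subspaces, here $\overline{\partial_e T(A)}\subseteq T(A)$, by restricting the measures. For $(7)\Rightarrow(8)$ I would identify the center $Z(A\dfin)$ with a commutative von Neumann algebra whose Gelfand-type spectrum is governed by $\overline{\partial_e T(A)}$, and invoke \cite{BeardenCrann} to match von Neumann-amenability of $\alpha_{\mathrm{fin}}^{\ast\ast}$ with topological amenability of the induced action on that spectrum. Finally, to close the cycle at $(8)\Rightarrow(1)$, note that the net $(\xi_\lambda)$ in $C_c(G,Z(A\dfin))$ provided by von Neumann-amenability is central, hence commutes with all of $A$; I would approximate each coordinate $\xi_\lambda(g)$ in $\|\cdot\|_{2,u}$ by a contraction in $A$ via Kaplansky density, upgrade the ultraweak approximate invariance to $\|\cdot\|_{2,u}$-invariance by a convexity (Day--Namioka) argument, and use \autoref{rem:alphathings}. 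Centrality of the $\xi_\lambda(g)$ then forces the $A$-valued approximants to be approximately central (a two-line estimate via \autoref{lma:Cauchy-Schwarz}), recovering all four conditions of \autoref{df:TracialAmenAct}.

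It remains to incorporate $(3),(4),(5)$ when $G$ is countable and exact. Internally, $(4)\Leftrightarrow(5)$ is the characterization of amenability of the system $(A^\omega\cap\iota(A)',\alpha^\omega)$ by approximately invariant positive-type functions (\autoref{thm:positive type} and its converse from \cite{AD_systemes_1987}), while $(3)\Leftrightarrow(5)$ is the universal property of amenable systems: amenability of $\alpha^\omega$ produces equivariant u.c.p.\ maps out of any system by averaging against the approximately invariant vectors, and conversely a suitable choice of $(C,\gamma)$ detects amenability. To splice this block into the cycle I would prove $(5)\Rightarrow(1)$, by lifting the amenability data for $\alpha^\omega$ coordinatewise through the tracial ultrapower and running a diagonal argument over $\omega$ to produce sequences in $C_c(G,A)$ realizing \autoref{df:TracialAmenAct} (here the C$^*$-norm on $A^\omega\cap\iota(A)'$ becomes $\|\cdot\|_{2,u}$ on $A$, and membership in the relative commutant becomes approximate centrality). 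The reverse bridge $(6)\Rightarrow(5)$ is where exactness of $G$ genuinely enters, following \cite{Ozawa_Suzuki_2020}: one must promote topological amenability of $G\acton T(A)$ to honest amenability of the induced action on the relative-commutant ultrapower. I expect this to be the main obstacle, since it requires transporting the Ozawa--Suzuki machinery from the C$^*$-norm to the uniform trace norm while keeping control of the passage to $A^\omega\cap\iota(A)'$; exactness is used exactly as there, to turn the approximate invariance over the trace space into a genuine equivariant approximation.
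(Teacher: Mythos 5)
Your skeleton for the unconditional part --- the cycle $(1)\Rightarrow(2)\Rightarrow(6)\Rightarrow(7)\Rightarrow(8)\Rightarrow(1)$ --- is exactly the paper's, and your proof of $(2)\Rightarrow(6)$ by building weak$^*$-continuous, approximately equivariant maps $T(A)\to\mathrm{Prob}(G)$ from $\mu^\tau(g)=\tau(\xi(g)^*\xi(g))$ is a correct alternative to the paper's route through positive type functions and \autoref{thm:positive type}. However, two of your arrows hide genuine gaps. For $(7)\Rightarrow(8)$, the phrase ``identify $Z(A\dfin)$ with a commutative von Neumann algebra whose Gelfand-type spectrum is governed by $\overline{\partial_e T(A)}$'' is precisely the hard content, not a routine identification: the spectrum of $Z(A\dfin)$ is a hyperstonean space, not $\overline{\partial_e T(A)}$, and what is actually needed is Ozawa's theorem \cite[Theorem~3]{Ozawa_Dixmier_2013} producing a unital homomorphism $B(\partial_e T(A))\to Z(A\dfin)$, together with a proof that it is equivariant (which the paper obtains from uniqueness of boundary measures); this occupies most of the paper's argument for this implication, and \cite{BeardenCrann} alone cannot supply it. For $(8)\Rightarrow(1)$, the step ``approximate each coordinate $\xi_\lambda(g)$ in $\|\cdot\|_{2,u}$ by a contraction in $A$ via Kaplansky density'' is impossible as stated: the unit ball of $A$ is not $\|\cdot\|_{2,u}$-dense in that of $A\dfin$ (take $A=C[0,1]$ and the indicator function of a point, which lies in $Z(A\dfin)$; testing against Dirac measures and Lebesgue measure shows every continuous function stays at definite uniform $2$-norm distance from it). Kaplansky density only yields closeness in $\|\cdot\|_{2,\tau}$ for finitely many traces at a time, and the passage from finitely many traces to a uniform estimate over all of $T(A)$ is where the real work lies. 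Moreover, since $\langle\xi,\xi\rangle$ and the invariance and centrality expressions are quadratic in $\xi$, Day--Namioka convex combinations of the $\xi$'s themselves do not accomplish this; the paper instead applies Hahn--Banach to the continuous affine functions $\tau\mapsto\tau(D(\eta))$ on $T(A)$ and then converts the resulting convex combination back into a single element of $C_c(G,A)$ by replacing the $\eta_j$ with right translates having pairwise disjoint supports.

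The splicing of $(3),(4),(5)$ is incomplete in two ways. First, your bridge $(6)\Rightarrow(5)$ is explicitly left unproven (you call it ``the main obstacle''), so the ultrapower block is never actually attached to the cycle; the paper avoids this arrow entirely, entering the block through $(1)\Rightarrow(3)$ via the explicit averaging construction $\Phi_n(c)=\sum_{g\in G}\varphi(\gamma_{g^{-1}}(c))\,\xi_n(g)^*\xi_n(g)$ against a state $\varphi$ on $C$, and leaving it through $(4)\Rightarrow(1)$, so that no promotion from the trace space to the ultrapower is ever needed. Second, $(4)\Leftrightarrow(5)$ is not ``the characterization of amenability by positive-type functions'' from Anantharaman-Delaroche: \autoref{thm:positive type} requires positive type functions valued in the \emph{center}, and its converse holds only for commutative algebras, whereas condition $(4)$ allows values anywhere in $A^\omega\cap\iota(A)'$. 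For general actions, algebra-valued positive type functions tending to $1$ do \emph{not} imply amenability --- this is exactly the failure recorded in \autoref{rem:AmenSA} --- and the paper's proof of $(4)\Rightarrow(5)$ must first gain quasi-centrality by a reindexing argument that exploits the sequence structure of $A^\omega\cap\iota(A)'$, and only then invoke the Ozawa--Suzuki equivalence of the QAP with amenability. As written, your proposal establishes the unconditional equivalences only modulo the two gaps above, and does not establish the countable exact part at all.
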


For the proof we need the following well-known lemmas.

\begin{lma} \label{lma:lifting contractions}
	Let $A$ and $B$ be unital C$^*$-algebras, let $\pi\colon A\to B$ be a surjective  homomorphism, and let $F$ be a finite set. Denote the induced map $\ell^2(F,A)\to \ell^2(F,B)$ again by $\pi$. Given $\eta\in \ell^2(F,B)$, there exists $\xi\in \ell^2(F,A)$ such that $\pi(\xi)=\eta$ and $\|\xi\|=\|\eta\|$.
\end{lma}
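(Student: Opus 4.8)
The plan is to avoid matrix tricks and argue directly inside the Hilbert module $\ell^2(F,A)$, reducing everything to a single functional-calculus cutoff of an arbitrary lift. Since $F$ is finite, $\ell^2(F,A)=C_c(F,A)$ is the free right Hilbert $A$-module of rank $|F|$, with norm $\|\xi\|=\|\langle\xi,\xi\rangle\|^{1/2}$, and the induced map $\pi\colon\ell^2(F,A)\to\ell^2(F,B)$ acts coordinatewise. We may assume $\eta\neq 0$ and set $r=\|\eta\|$. As $\pi$ is surjective, I would choose any $\xi_0\in\ell^2(F,A)$ with $\pi(\xi_0)=\eta$; in general $\|\xi_0\|>r$, so the point is to shrink its norm without disturbing its image.

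The correction is by right multiplication with a function of $h:=\langle\xi_0,\xi_0\rangle\in A_+$. Let $f\colon[0,\infty)\to[0,\infty)$ be the continuous function with $f(t)=1$ for $t\le r^2$ and $f(t)=r\,t^{-1/2}$ for $t\ge r^2$ (so $f\le 1$ and $tf(t)^2\le r^2$ for all $t$), and put $\xi=\xi_0\cdot f(h)$, i.e.\ $\xi(g)=\xi_0(g)f(h)$. A direct computation using right $A$-linearity of the inner product gives
\[\langle\xi,\xi\rangle=f(h)\,\langle\xi_0,\xi_0\rangle\,f(h)=h\,f(h)^2=g(h),\qquad g(t):=t f(t)^2,\]
and since $g\le r^2$ on $[0,\infty)$ we get $\|\xi\|^2=\|g(h)\|\le r^2$, so $\|\xi\|\le r$.

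It remains to check that $\xi$ is still a lift of $\eta$ and that the norm is exactly $r$. Applying $\pi$, we have $\pi(h)=\langle\eta,\eta\rangle$, which satisfies $0\le\pi(h)\le r^2\cdot 1_B$; as $f\equiv 1$ on $[0,r^2]$ and $B$ is unital, this forces $f(\pi(h))=1_B$, whence $\pi(\xi)(g)=\eta(g)f(\pi(h))=\eta(g)$, i.e.\ $\pi(\xi)=\eta$. Finally, since $\langle\pi(\xi),\pi(\xi)\rangle=\pi(\langle\xi,\xi\rangle)$ and $\pi$ is contractive on $A$, the induced map is norm-decreasing, so $\|\xi\|\ge\|\pi(\xi)\|=\|\eta\|=r$. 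Combined with $\|\xi\|\le r$ this yields $\|\xi\|=\|\eta\|$.

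There is no genuine obstacle here; conceptually this is just the module version of the classical fact that a surjective $*$-homomorphism maps closed balls onto closed balls (one could instead embed $\eta$ into the first column of $M_{|F|}(B)$ and lift along the amplification $M_{|F|}(A)\to M_{|F|}(B)$, then extract the first column, but that route additionally requires the corner-compression estimate $\|(X^*X)_{11}\|\le\|X\|^2$). The only points demanding a little care are that unitality of $A$ and $B$ is exactly what makes the functional calculus of $f$ available with $f(0)=1$ and guarantees $f(\langle\eta,\eta\rangle)=1_B$, and that $f$ must be chosen identically $1$ below the threshold $r^2$ while still satisfying $tf(t)^2\le r^2$, so that the single cutoff simultaneously caps the norm and leaves the image fixed.
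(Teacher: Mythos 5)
Your proof is correct, and it takes a genuinely different route from the paper's. The paper embeds $\ell^2(F,A)$ isometrically into $M_{|F|}(A)$ as a first column, invokes the standard norm-preserving lifting lemma for surjective homomorphisms (citing \cite[Lemma~17.3.3]{WeggeOlsen}) applied to the amplification $\pi\otimes\mathrm{id}_{M_{|F|}}$, and then compresses back into the column by right multiplication with $p=\diag(1,0,\dotsc,0)$. You instead work entirely inside the module: take an arbitrary coordinatewise lift $\xi_0$ and repair its norm by the cutoff $\xi=\xi_0\cdot f(\langle\xi_0,\xi_0\rangle)$, with $f\equiv 1$ on $[0,r^2]$ and $f(t)=rt^{-1/2}$ beyond, so that $\langle\xi,\xi\rangle=g(\langle\xi_0,\xi_0\rangle)$ with $g(t)=tf(t)^2\le r^2$, while $f(\pi(\langle\xi_0,\xi_0\rangle))=1_B$ keeps the image fixed. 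In effect you have inlined (and adapted to Hilbert modules) the very functional-calculus argument that proves the cited Wegge-Olsen lemma, which is exactly the same mechanism hiding behind the paper's citation. What your version buys is self-containedness and transparency: no matrix embedding, and it is visible precisely where unitality and surjectivity (hence unitality of $\pi$, so that functional calculus commutes with $\pi$) enter. What the paper's version buys is brevity—delegating the cutoff to a standard reference—plus the reusable observation that $\ell^2(F,A)$ sits isometrically inside $M_{|F|}(A)$. One tiny bookkeeping point on your side: you should dispose of the case $\eta=0$ (take $\xi=0$) explicitly, as your function $f$ is only defined when $r>0$; you implicitly do this by assuming $\eta\neq0$ at the outset, which is fine.
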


\begin{proof}
	Note that there is an isometric embedding 
	$\ell^2(F,A)\hookrightarrow M_{|F|}(A)$ given by
	\[(a_1,\dotsc,a_n)\mapsto \begin{pmatrix}
	a_1 	&0 		&\dotsb 	&0		\\ 
	\vdots	&\vdots &\ddots		&\vdots \\
	a_n		&0 		&\dotsb		&0		\\
	\end{pmatrix},\]
	and an analogous embedding $\ell^2(F,B)\hookrightarrow M_{|F|}(B)$.
	By a standard functional calculus argument (see, for example, \cite[Lemma~17.3.3]{WeggeOlsen}), there is an element $\xi'\in M_{|F|}(A)$ satisfying $\pi\otimes\text{id}_{M_{| F |}}(\xi')=\eta$ and $\|\xi'\|=\|\eta\|$. We write $p=\diag(1,0,\dotsc,0)\in M_{|F|}(A)$. Then $\xi\coloneqq \xi' p\in \ell^2(F,A)$ has the desired properties. 
\end{proof}

\begin{lma}\label{lma:cts-affine}
Let $K$ be a compact convex set and denote by $\mathrm{Aff}_c(K)\subseteq C(K)$ the set of all continuous affine functions $K\to \mathbb R$. Then the restriction of the weak topology of $C(K)$ to $\mathrm{Aff}_c(K)$ coincides with the topology of pointwise convergence.
\end{lma}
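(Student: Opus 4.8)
The plan is to prove \autoref{lma:cts-affine}, which asserts that on the subspace $\mathrm{Aff}_c(K)\subseteq C(K)$ of continuous affine functions on a compact convex set $K$, the weak topology inherited from $C(K)$ and the topology of pointwise convergence coincide. Since $C(K)$ carries its sup-norm, the weak topology is the one induced by the dual $C(K)^*$, which by the Riesz representation theorem consists of (differences of) regular Borel measures on $K$. The topology of pointwise convergence is induced by the evaluation functionals $\mathrm{ev}_x\colon f\mapsto f(x)$ for $x\in K$, which are themselves point-mass measures $\delta_x\in C(K)^*$. Since every evaluation functional is a weak-continuous functional on $C(K)$, the pointwise topology is immediately coarser than the weak topology on all of $C(K)$, hence also on $\mathrm{Aff}_c(K)$. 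So the only content is the reverse inclusion: every weakly continuous functional, when restricted to $\mathrm{Aff}_c(K)$, is continuous for the pointwise topology.

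\emph{First} I would reduce to showing that each $\mu\in C(K)^*$, viewed as a functional on the subspace $\mathrm{Aff}_c(K)$, agrees with a functional that is continuous in the topology of pointwise convergence; equivalently, that the restriction map $\mu\mapsto \mu|_{\mathrm{Aff}_c(K)}$ factors through the pointwise-evaluation functionals. The key tool is the barycenter map. For a regular Borel probability measure $\mu$ on the compact convex set $K$, there is a unique barycenter $b(\mu)\in K$ characterized by $f(b(\mu))=\int_K f\,d\mu$ for every $f\in\mathrm{Aff}_c(K)$; this is a standard fact (Choquet theory, e.g.\ via Hahn--Banach applied to the continuous affine functions). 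Thus for a positive functional, integration against $\mu$ restricted to affine functions is nothing but evaluation at the single point $b(\mu)$, which is manifestly continuous for pointwise convergence. A general $\mu\in C(K)^*$ is a complex linear combination of positive functionals, and after normalizing each positive part to a probability measure (handling the zero measure separately), $\mu|_{\mathrm{Aff}_c(K)}$ becomes a finite linear combination of point evaluations $\mathrm{ev}_{x_i}$. Any finite linear combination of evaluation functionals is by definition continuous in the topology of pointwise convergence.

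\emph{Then} I would assemble these facts: a net $(f_\lambda)$ in $\mathrm{Aff}_c(K)$ converges pointwise to $f$ if and only if $\mathrm{ev}_x(f_\lambda)\to \mathrm{ev}_x(f)$ for all $x\in K$; by the barycenter computation this forces $\mu(f_\lambda)\to \mu(f)$ for every $\mu\in C(K)^*$, which is exactly weak convergence. Conversely weak convergence implies pointwise convergence since each $\mathrm{ev}_x$ is weakly continuous. Because both topologies are determined by the same separating families of functionals (the evaluations $\mathrm{ev}_x$ generate the pointwise topology, and the above shows they also generate the restriction of the weak topology to $\mathrm{Aff}_c(K)$), the two topologies coincide on $\mathrm{Aff}_c(K)$.

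The main obstacle, and really the only nontrivial input, is the existence and defining property of the barycenter: that integrating a continuous affine function against a probability measure on $K$ equals evaluating that function at a single point of $K$. This relies on compactness and convexity of $K$ to guarantee that the barycenter actually lies in $K$, and on a separation argument to see it is characterized by the affine functions. Once this is in hand, everything reduces to the elementary observation that point evaluations generate the pointwise topology, and the rest is formal manipulation of the functionals separating the two topologies. I would make sure to treat the decomposition of a signed or complex measure into positive pieces carefully, and to address the trivial case where a positive part vanishes so that no barycenter is defined.
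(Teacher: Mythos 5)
Your proposal is correct and follows essentially the same route as the paper: the paper's (very terse) proof also rests on the single fact that $\int_K f\,d\mu = f(\beta(\mu))$ for $f\in\mathrm{Aff}_c(K)$ and $\mu$ a Radon probability measure with barycenter $\beta(\mu)$ (citing Takesaki, Lemma IV.6.3), leaving implicit the decomposition of a general functional in $C(K)^*$ into positive parts and the resulting identification of its restriction to $\mathrm{Aff}_c(K)$ with a linear combination of point evaluations. You have simply spelled out those implicit steps, including the normalization to probability measures and the trivial reverse inclusion.
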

\begin{proof}
	This follows from the fact that for every $f\in \mathrm{Aff}_c(K)$ and any Radon probability measure $\mu$ on $K$, we have 
		\[\int_Kfd\mu=f(\beta(\mu)),\]
	where $\beta(\mu)\in K$ denotes the barycenter of $\mu$ (see \cite[Lemma IV.6.3]{takesaki}). 
\end{proof}

\begin{proof}[Proof of \autoref{thm:TracAm}]
We prove the following set of implications where, in the implications labeled by dashed arrows, we additionally assume that $G$ is countable and exact:
\[
\begin{tikzcd}[column sep=2 em, row sep=0.2 em]
	\eqref{item:amenable ultrapower}\arrow[r,Leftrightarrow,dashed]  &\eqref{item:positive type}\arrow[r,Rightarrow,dashed] &\eqref{item:trac amen}\arrow[dd,Rightarrow] \arrow[ddl,Rightarrow,dashed] &\eqref{item:vN}\arrow[l,Rightarrow] & \\
	&&&&\eqref{item:dT(A)}\arrow[ul,Rightarrow]\\
	&\eqref{item:map from C}\arrow[uu,Rightarrow,dashed]&\eqref{item:noncentral}\arrow[r,Rightarrow] &\eqref{item:T(A)} \arrow[ur,Rightarrow]&
\end{tikzcd}
\]

$\eqref{item:trac amen}\Rightarrow \eqref{item:map from C}$. Fix a state $\varphi\in S(C)$. Since $G$ is countable and $A$ is separable, we can find a sequence $(\xi_n)_{n\in \N}$ in $C_c(G,A)$ satisfying 
\begin{enumerate}[label=(a.\arabic*)]
	\item $\|\xi_n\|\leq 1$ for all $n\in \N$;
	\item $\lim_{n\to \infty}\|\xi_na-a\xi_n\|_{2,u}=0$ for all $a\in A$;
	\item $\lim_{n\to \infty}\|\langle \xi_n,\xi_n\rangle -1\|_{2,u}=0$;
	\item $\lim_{n\to \infty}\|\widetilde{\alpha}_g(\xi_n)-\xi_n\|_{2,u}=0$ for all $g\in G$. 
\end{enumerate} 
We define a map $\widetilde{\Phi}=(\Phi_n)_{n\in\N}\colon C\to \ell^\infty(\N,A)$ by setting
\[\Phi_n(c)\coloneqq \sum_{g\in G}\varphi(\gamma_{g^{-1}}(c))\xi_n(g)^*\xi_n(g),\]
for $n\in \N$ and $c\in C$. (Note that $\sup_{n\in\mathbb{N}}\|\Phi_n\|\leq 1$ by (a.1) above, so that the resulting map $\widetilde{\Phi}$ does indeed take values in $\ell^{\infty}(\mathbb{N},A)$.) Denote by $\Phi\colon C\to A^\omega$ the 
composition of $\widetilde{\Phi}$ with the canonical quotient map 
$\ell^\infty(\N,A)\to A^\omega$.
We claim that $\Phi$ is an equivariant, unital, completely positive map,
and that its image is contained in $A^\omega\cap \iota(A)'$. 
The fact that $\Phi$ is unital follows from the equality ${\|\Phi_n(1)-1\|_{2,u}=\|\langle \xi_n,\xi_n\rangle -1\|_{2,u}}$ together with (a.3). As each $\Phi_n$ is completely positive, the same is true for $\widetilde{\Phi}$ and thus also for $\Phi$. To see that $\Phi$ takes values in $A^\omega \cap \iota(A)'$, fix $a\in A$ and $c\in C$. Given
$d\in C$, denote by $T_d\in \mathcal{L}(\ell^2(G,A))$ the operator 
given by pointwise multiplication with the function $g\mapsto \varphi(\gamma_{g^{-1}}(d))$, and note that $\|T_d\|\leq \|d\|$. 
Using the Cauchy-Schwarz inequality (see part~(2) of \autoref{lma:Cauchy-Schwarz}) and $\|\xi_n\|\leq 1$ at the last step, we get 
\begin{align*}
\big\|a\Phi_n(c)-\Phi_n(c)a\big\|_{2,u} &=\big\|a\langle \xi_n,T_c\xi_n\rangle -\langle \xi_n,T_c\xi_n\rangle a \big\|_{2,u} \\
&= \big\|\langle \xi_n a^*,T_{c}\xi_n\rangle -\langle T_{c^*}\xi_n,\xi_n a\rangle \big\|_{2,u}\\
&\leq \big\|\langle \xi_n a^*-a^*\xi_n,T_{c} \xi_n\rangle \big\|_{2,u}+\big\|\langle T_{c^*} \xi_n,a\xi_n-\xi_na\rangle \big\|_{2,u}\\
&\leq \|c\|\big(\|\xi_n a^*-a^*\xi_n\|_{2,u}+\|a\xi_n -\xi_na\|_{2,u}\big),
\end{align*}
with the latter term converging to zero as $n \to \infty$, by (a.2).
To see that $\Phi$ is equivariant, fix $g\in G$ and $c\in C$. Using again the Cauchy-Schwarz inequality and $\|\xi_n\|=\|\widetilde{\alpha}_g(\xi_n)\|\leq 1$ at the last step, we obtain
\begin{align*}
\big\|\alpha_g(\Phi_n(c))&-\Phi_n(\gamma_g(c))\big\|_{2,u} \\
&=\Big\|\sum_{h\in G}\varphi(\gamma_{h^{-1}g}(c))\Big(\alpha_g(\xi_n(g^{-1}h)^*\xi_n(g^{-1}h)) -\xi_n(h)^*\xi_n(h)\Big)\Big\|_{2,u}\\
&=\Big \|\langle  \widetilde{\alpha}_g(\xi_n),T_{\gamma_g(c)}\widetilde{\alpha}_g(\xi_n)\rangle -\langle \xi_n,T_{\gamma_g(c)}\xi_n\rangle \Big\|_{2,u}\\
&\leq 2\|c\|\|\xi_n-\widetilde{\alpha}_g(\xi_n)\|_{2,u},
\end{align*}
with the latter term converging to zero as $n \to \infty$, by (a.4). This proves \eqref{item:map from C}.

$\eqref{item:map from C}\Rightarrow \eqref{item:positive type}$. Recall that since $G$ is countable and exact, there is an amenable action of $G$ on a compact, metrizable space $X$ (see \cite[Theorem~5.1.7]{BrownOzawa}). By \autoref{thm:positive type}, there exists a sequence of finitely supported positive type functions $\theta_n\colon G\to C(X)$ with respect to $\alpha$ satisfying $\theta_n(e)\leq 1$ and 
$\lim_{n\to \infty}\|\theta_n(g)-1\|=0$ for all $g \in G$.
If $\Psi\colon C(X)\to A^\omega\cap \iota(A)'$ is an equivariant, unital, completely positive map, then $\Psi\circ \theta_n\colon G\to A^\omega\cap \iota(A)'$,
for $n \in \mathbb{N}$, gives a sequence of positive type functions with respect to $\alpha^\omega$ with the desired properties.

$\eqref{item:positive type}\Rightarrow\eqref{item:amenable ultrapower}$. We prove amenability of the action $\alpha^\omega \colon
G \to \text{Aut}(A^\omega \cap \iota(A)')$ by showing that it satisfies the QAP, which can be stated as
our \autoref{df:TracialAmenAct} once every occurrence of the tracial norm is replaced with the C$^*$-norm on $A^\omega$
(see \cite[Definition 3.1]{BusEchWil20}), and then resorting
to \cite[Theorem~3.2]{Ozawa_Suzuki_2020}. Fix a finite subset $K \subseteq G$ and $\varepsilon>0$. By assumption, there is a finitely supported positive type function $\theta\colon G\to A^\omega\cap \iota(A)'$ satisfying $\theta(e)\leq 1$ and $\|\theta(g)-1\|<
\varepsilon $ for all $g\in K$. By \cite[Proposition~2.5]{AD_systemes_1987}, there exists $\eta\in \ell^2(G,A^\omega\cap\iota(A)')$ with $\theta(g)=\left\langle \eta, \widetilde{\alpha^\omega_g}(\eta)\right\rangle$ for all $g\in G$. 
By slightly perturbing $\eta$ (and therefore $\theta$), we can assume that  $\eta$ has finite support.
This almost gives the 
QAP for $\alpha^\omega$, except for the condition of almost centrality, which follows from a standard reindexing argument.
To show this, fix a finite set $F \subseteq A^\omega
\cap \iota(A)'$. Applying \autoref{lma:lifting contractions} to the quotient map $\ell^\infty(\N,A)\to A^\omega$, we can find a sequence $(\eta_n)_{n\in \N}$ in $C_c(G,A)\subseteq \ell^2(G,A)$ that represents $\eta$ and satisfies $\|\eta_n\|\leq 1$ and $\supp (\eta_n)\subseteq \supp (\eta)$ for every $n\in \N$. 
Since we have $\|\theta(g)-1\|< \varepsilon $ for all $g\in K$, we can find $(b_n^{(g)})_{n\in \N}\in \ell^\infty(\N,A)$, for $g\in K$, satisfying $\lim_{n\to \omega}\|b_n^{(g)}\|_{2,u}=0$ for all $g\in K$, and 
\begin{equation}\label{eq:eta_n}
	\|\langle \eta_n,\widetilde{\alpha}_g(\eta_n)\rangle -1+b_n^{(g)}\|\leq\varepsilon 
\end{equation} 
for all $n\in\N$ and $g\in K$. Now choose a dense sequence $(a_n)_{n\in \N}$ in $A$, and for each $x\in F$ find a representative $(x_n)_{n\in \N}\in \ell^\infty(\N,A)$. Using that the $(\eta_n)_{n\in \N}$ have uniformly finite support (contained in $\supp(\eta)$) and that each $(\eta_n(g))_{n\in \N}$, for $g\in \supp (\eta)$, represents the element $\eta(g)\in A^\omega\cap \iota(A)'$, we can find inductively an increasing sequence $(k(n))_{n\in \N}$ of natural numbers satisfying 
\begin{enumerate}[label=(b.\arabic*)]
	\item $\|x_n\eta_{k(n)}-\eta_{k(n)}x_n\|_{2,u}<\frac 1 n$, for all $n\in \N$;
	\item $\|a_j\eta_{k(n)}-\eta_{k(n)}a_j\|_{2,u}<\frac 1 n$, for all $n\in \N$ and $j=1,\dotsc,n$;
	\item $\| b_{k(n)}^{(g)} \|_{2,u} < \frac 1 n$, for all $n\in \N$ and $g\in K$.
\end{enumerate}
Then the sequence $\xi\coloneqq ( \eta_{k(n)})_{n\in \N}$ gives rise to a contraction in $\ell^2(G,A^\omega\cap \iota(A)')$ of finite support satisfying $x\xi=\xi x$ for all $x\in F$. 
%Now set 
%$\tilde b=(b_{k(n)})_{n\in \N}\in \ell^2(\N,A)$ and note that $\tilde b$ represents $0$ in $A^\omega\cap \iota(A)'$. 
Moreover,
	\[\big\|\big\langle \xi,\widetilde{\alpha^\omega_g}(\xi)\big\rangle -1\big\|\leq\sup_{n\in \N}\big\|\langle \eta_{k(n)},\widetilde{\alpha}_g(\eta_{k(n)})\rangle-1+{b_{k(n)}^{(g)}}\big\|\stackrel{\eqref{eq:eta_n}}{\leq}\varepsilon,\]
for all $g\in K$. This proves \eqref{item:amenable ultrapower}.

$\eqref{item:amenable ultrapower}\Rightarrow \eqref{item:positive type}$. This follows from associating to an element 
$\xi\in \ell^2(G,A^\omega\cap \iota(A)')$ of finite support, the finitely supported positive type function $\theta_\xi\colon G\to A^\omega\cap \iota(A)'$ given by
	\[\theta_\xi(g)= \left\langle \xi,\widetilde{\alpha^\omega_g}(\xi)\right\rangle\]
for all $g\in G$, and using that amenability of $\alpha^\omega$ is equivalent to the QAP.

$\eqref{item:positive type} \Rightarrow \eqref{item:trac amen}$. 
Let $F\subseteq A$ and $K\subseteq G$ be finite sets and $\varepsilon>0$. 
As in the proof of $\eqref{item:positive type}\Rightarrow\eqref{item:amenable ultrapower}$, we can find an element $\eta\in C_c(G,A^\omega\cap \iota(A)')$ satisfying $\|\eta\|\leq 1$ and 
\begin{equation}\label{eq:eta=1}
\left\|\left\langle \eta,\widetilde{\alpha^\omega_g}(\eta)\right\rangle -1\right\|< \varepsilon,
\end{equation} 
for all $g\in K$, and a sequence $(\eta_n)_{n\in \N}$ in $C_c(G,A)$ representing $\eta$ that satisfies $\|\eta_n\|\leq 1$ and $\supp (\eta_n)\subseteq \supp (\eta)$ for every $n\in \N$. We can thus find a large enough $n_0\in \N$ so that 
\begin{enumerate}[label=(c.\arabic*)]
	\item $\|a\eta_{n_0}-\eta_{n_0}a\|_{2,u}<\varepsilon$, for all $a\in F$;
	\item $\|\langle\eta_{n_0},\widetilde{\alpha}_g(\eta_{n_0})\rangle -1\|_{2,u}\leq \varepsilon$, for all $g\in K$.
\end{enumerate}
This proves \eqref{item:trac amen} (see \autoref{rem:alphathings}).

$\eqref{item:trac amen}\Rightarrow \eqref{item:noncentral}$. This is immediate. 

$\eqref{item:noncentral}\Rightarrow \eqref{item:T(A)}$. Fix a finite set $K\subseteq G$ and $\varepsilon>0$. Choose an element $\xi\in C_c(G,A)$ satisfying $\|\xi\|\leq 1$ and 
	\[\|\langle \xi,\widetilde{\alpha}_g(\xi)\rangle-1\|_{2,u}< \varepsilon,\]
	for $g\in K$. 
	 We define a positive type function $\theta\colon G\to C(T(A))$ by
\[\theta(g)(\tau)\coloneqq \tau(\langle\xi,\widetilde{\alpha}_g(\xi)\rangle)\]
for all $g\in G$ and all $\tau\in T(A)$.
Using part~(1) of \autoref{lma:Cauchy-Schwarz} at the second step, we get 
\begin{align*}
\|\theta(g)-1\|&= \sup_{\tau\in T(A)}\left|\tau(\langle \xi,\widetilde{\alpha}_g(\xi)\rangle -1)\right| \\
&\leq \|\langle \xi,\widetilde{\alpha}_g(\xi)\rangle-1\|_{2,u} < \varepsilon.
\end{align*}
for all $g\in K$. Thus, $G\acton T(A)$ is topologically amenable by \autoref{thm:positive type}.

$\eqref{item:T(A)} \Rightarrow \eqref{item:dT(A)}$. This follows 
by composing the functions $G\to C(T(A))$ coming from amenability of $G\curvearrowright T(A)$ with the equivariant 
quotient map $C(T(A))\to C(\overline{\partial_e T(A)})$.

$\eqref{item:dT(A)}\Rightarrow\eqref{item:vN}$. 
Denote by $B(\partial_e T(A))$ the $C^*$-algebra of Borel bounded functions on $\partial_e T(A)$.
By \cite[Theorem 3]{Ozawa_Dixmier_2013}, there is a unital  homomorphism $\theta \colon B(\partial_e T(A)) \to Z(A\dfin)$.
We claim that $\theta$ is equivariant.
Given $\tau\in T(A)$, identify $\tau$ with its unique normal 
extension to $A^{\ast\ast}_{\mathrm{fin}}$ and write $\mu_\tau$ for 
the unique probability measure on $\partial_eT(A)$ whose barycenter
is $\tau$. 
The map $\theta\colon B(\partial_e T(A)) \to Z(A\dfin)$ obtained in \cite[Theorem 3]{Ozawa_Dixmier_2013}
satisfies
\begin{equation} \label{eq:normal}
\tau(\theta(f)) = \int_{\partial_e T(A)} f(\lambda) \, d\mu_\tau(\lambda),
\end{equation}
for every $f \in B(\partial_e T(A))$ and every 
$\tau\in T(A)$. Condition \eqref{eq:normal} uniquely determines $\theta$, since traces on $A$ (or rather their normal extensions
to $A\dfin$) are in bijective correspondence with the normal states on $Z(A\dfin)$, via the restriction map (see \cite[Theorem~III.2.5.7]{Black_06} and \cite[Theorem~III.2.5.14]{Black_06}).
In what follows, $\alpha^*$, $\alpha^{**}$ and $\alpha^{**}_{\rm{fin}}$ denote the actions induced by $\alpha$ respectively on $T(A)$,
$B(\partial_eT(A))$ and $A\dfin$.
In order to verify that $\theta$ is equivariant, it is thus sufficient to check
that $\theta_g:= (\alpha_{\mathrm{fin}}^{\ast\ast})_g \circ \theta \circ \alpha^{**}_{g^{-1}}$ also verifies equality \eqref{eq:normal} for every $g \in G$.
To prove this, notice first that, given $\tau \in T(A)$ and $g \in G$, by uniqueness of the boundary measure $\mu_{\tau \circ \alpha_g}$ we have that
\begin{equation} \label{eq:boundary}
\mu_{\tau \circ \alpha_g} = \mu_\tau \circ \alpha^*_g.
\end{equation}
Fix $f \in B(\partial_e T(A))$ and $\tau\in T(A)$.
Then
\begin{align*}
\tau(\theta_g(f)) &= (\tau \circ \alpha_g)(\theta(f \circ \alpha^*_g)) \\
&\stackrel{\mathclap{\eqref{eq:normal}}}{=}  \int_{\partial_e T(A)} f(\alpha_g^*(\lambda)) \, d\mu_{\tau \circ \alpha_g}(\lambda)\\
&\stackrel{\mathclap{\eqref{eq:boundary}}}{=} \int_{\partial_e T(A)} f(\alpha_g^*(\lambda)) \, d\mu_{\tau } (\alpha_g^*(\lambda)) \\
&= \ \int_{\partial_e T(A)} f(\sigma) \, d\mu_{\tau } (\sigma).
\end{align*} 
This proves equivariance of $\theta$. Composing $\theta$ with the unital, equivariant inclusion of $C\left(\overline{\partial_eT(A)}\right)$ into $B(\partial_e T(A))$, one obtains a unital, equivariant  homomorphism $C\left(\overline{\partial_eT(A)}\right)\to Z(A\dfin)$. Since the action on $\overline{\partial_eT(A)}$ is assumed to be amenable, we conclude that the action on $A\dfin$ is von Neumann-amenable (see, for example, \cite[Lemma~3.21]{BusEchWil22}).

$\eqref{item:vN}\Rightarrow \eqref{item:trac amen}$. Fix finite sets $F\subseteq A$ and $K\subseteq G$ and $\varepsilon>0$. We may assume without loss of generality that $F$ consists of contractions.
Throughout the proof, we will identify $A$ with its natural (not necessarily isomorphic) copy inside $A\dfin$. This is possible thanks to \autoref{lma:lifting contractions} and the fact that $\left\|\cdot \right\|_{2,u}$ coincides on $A$ and its copy inside $A\dfin$.

We claim that for every $\delta>0$ and a finite subset $T\subseteq T(A)$, there is $\eta\in C_c(G,A)$ satisfying $\|\eta\|\leq 1$ and
\[1-\|\eta\|_{2,\tau}^2 + \sum_{g\in K}\|\eta-\widetilde{\alpha}_g(\eta)\|_{2,\tau}^2+\sum_{a\in F}\|a\eta-\eta a\|_{2,\tau}^2<\delta,\]
for all $\tau\in T$.

Set $M=A\dfin$ and $\gamma=\alpha\dfin$. Since $\gamma$ is von Neumann
amenable, there exists a net $(\xi_\lambda)_{\lambda\in\Lambda}$ of finitely supported
contractions in $\ell^2(G,Z(M))$ such that 
\[\langle \xi_\lambda, \xi_\lambda\rangle \to 1, \ \ \mbox{ and } \ \ 
 \langle \xi_\lambda-\widetilde{\gamma}_g(\xi_\lambda),\xi_\lambda-\widetilde{\gamma}_g(\xi_\lambda)\rangle \to 0
\]
pointwise on $G$ in the ultraweak topology. Since each trace 
$\tau\in T$ extends to a normal state on $M$, there exists $\lambda\in\Lambda$ such that, with $\xi\coloneqq \xi_\lambda$, we have
\[\tau(1-\langle \xi, \xi\rangle)<\tfrac{\delta}{9} \ \mbox{ and } \ 
 \sum_{g\in K} \tau(\langle \xi-\widetilde{\gamma}_g(\xi),\xi-\widetilde{\gamma}_g(\xi)\rangle)<\tfrac{\delta}{9},
\]
for all $\tau\in T$.
By an application of Kaplansky's density theorem for Hilbert modules (see \cite[Lemma~4.5]{BusEchWil22}), there exists
$\eta\in C_c(G,A)\subseteq \ell^2(G,A)$ with $\|\eta\|\leq 1$ satisfying 
$\tau(\langle \eta-\xi,\eta-\xi\rangle)^{\frac{1}{2}}<\frac{\delta}{9(|F|+|K|)}$, for all $\tau\in T$.
Using the triangle inequality, one gets
\begin{align*}
1-\|\eta\|^2_{2,\tau}  &=\tau(1-\langle \eta, \eta\rangle)<\tfrac{\delta}{3}, \ \ \mbox{ and } \\
 \sum_{g\in K}\|\eta-\widetilde{\alpha}_g(\eta)\|^2_{2,\tau} & =\sum_{g\in K}\tau(\langle \eta-\widetilde{\alpha}_g(\eta),\eta-\widetilde{\alpha}_g(\eta)\rangle)<\tfrac{\delta}{3},
\end{align*}
for all $\tau\in T$.
Since $\xi$ takes values in the center of $M$, then $\eta$ can be chosen so that 
$\sum_{a\in F}\|a\eta-\eta a\|_{2,\tau}^2<\tfrac{\delta}{3}$, for all $\tau\in T$. This proves the claim.

Given $\eta\in C_c(G,A)$, with $\|\eta\|\leq 1$, set
$Q(\eta)\coloneqq \langle \eta,\eta\rangle$ and 
\[D(\eta)\coloneqq 1-Q(\eta)+\sum_{g\in K}Q(\eta-\widetilde{\alpha}_g(\eta))+\sum_{a\in F}Q(a\eta-\eta a).\]
Note that the map $\widehat{D(\eta)}\colon T(A)\to [0,\infty)$
given by
$\widehat{D(\eta)}(\tau)= \tau(D(\eta))$,
for all $\tau\in T(A)$,
is continuous and affine. By the claim above, 
$0$ is in the closure of the set 
\begin{equation*}
Z\coloneqq \left\{\widehat{D(\eta)} \colon \eta\in C_c(G,A), \|\eta\|\leq 1\right\}\subseteq C(T(A))
\end{equation*}
with respect to the topology of pointwise convergence. Since the elements of $Z$ are continuous, affine functions, it follows from \autoref{lma:cts-affine} that $0$ is also in the weak closure of $Z$. Thus, by the Hahn--Banach Theorem, there are $\eta_1,\dotsc,\eta_n\in C_c(G,A)$ with $\|\eta_j\|\leq 1$ for all $j=1,\dotsc,n$, and $\lambda_1,\dotsc,\lambda_n\geq 0$ with $\sum_{j=1}^n \lambda_j =1$, such that 
\begin{equation}\label{eq:convex-combination}
\sum_{j=1}^n \lambda_j \widehat{D(\eta_j)}(\tau)\leq \varepsilon^2
\end{equation}
for all $\tau\in T(A)$. Assuming without loss of generality that $G$ is infinite, and replacing each $\eta_j$ with an appropriate right translate, we can assume that the supports of $\widetilde{\alpha}_g(\eta_i)$ and  $\widetilde{\alpha}_h(\eta_j)$ are disjoint whenever $i\not=j$ and $g,h\in K\cup \{1\}$. Set
\[\xi\coloneqq \sum_{j=1}^n \lambda_j^{\frac 1 2}\eta_j\in C_c(G,A).\]
Using that $\langle \eta_i, \eta_j\rangle \leq \delta_{i,j}$ for $i,j=1,\ldots,n$, one checks that $\|\xi\|\leq 1$, since
\[\|\xi\|^2\leq \sum_{j=1}^n \lambda_j\|\eta_j\|^2\leq 1,\]
and that
$D(\xi)=\sum_{j=1}^n\lambda_j D(\eta_j)$. Hence
\[\sup_{\tau\in T(A)}\tau(D(\xi))=\sup_{\tau\in T(A)}\sum_{j=1}^n \lambda_j \tau(D(\eta_j))\stackrel{\eqref{eq:convex-combination}}{\leq}\varepsilon^2,\]
which implies 
\begin{enumerate}
\item $\left\|\xi a-a \xi\right\|_{2,u} \leq \varepsilon$, for all $a\in F$; 
\item $\left\|\langle \xi,\xi\rangle-1\right\|_{2,u} \leq \varepsilon$; 
\item $\left\|\widetilde{\alpha}_g(\xi)-\xi\right\|_{2,u} \leq \varepsilon$, for all $g\in K$.
\end{enumerate}
This proves \eqref{item:trac amen}. 
\end{proof}

As mentioned before, some of the conditions in \autoref{thm:TracAm} 
resemble similar characterizations of amenability for actions on C$^*$-algebras. For example, the equivalence between \eqref{item:trac amen} and 
\eqref{item:amenable ultrapower} is the tracial version of
the fact that amenability
of an action on $A$ is equivalent to amenability of the induced action on
the norm-central sequence algebra $A_\omega\cap A'$; see
\cite[Theorem~4.4]{Ozawa_Suzuki_2020}, 
while the equivalence between \eqref{item:trac amen} and \eqref{item:vN} is the tracial version of the fact that amenability
of an action on $A$ is equivalent to von Neumann amenability of the induced
action on $A^{\ast\ast}$; see
\cite[Theorem~3.2]{Ozawa_Suzuki_2020} and \cite[Lemma~6.5]{AbadieBussFerraro}. On the other hand, \eqref{item:noncentral}, \eqref{item:T(A)} and \eqref{item:dT(A)}
do not have analogous statements for amenable actions.

The authors would like to thank Siegfried Echterhoff and Rufus Willett for pointing out to them the following observation.

\begin{rem}\label{rem:AmenSA}
The analogue of \eqref{item:trac amen} $\Leftrightarrow$ \eqref{item:T(A)} in \autoref{thm:TracAm}, where one 
replaces tracial amenability with amenability, and $T(A)$ with the state space $S(A)$, is not true. While amenability of an action $\alpha\colon G\to \Aut(A)$ always implies amenability of the induced action on the state space by \cite[Proposition 3.5]{Ozawa_Suzuki_2020}, the converse fails. An easy example can be constructed from any amenable action $G\curvearrowright X$ of a nonamenable group $G$ on a compact space $X$ by considering the associated inner action $\beta$ on $C(X)\rtimes G$. As an inner action, $\beta$ is not amenable as it induces the trivial action on $Z((C(X)\rtimes G)^{**})$. However, $\beta$ induces an amenable action on $S(C(X)\rtimes G)$. Indeed, this follows from \autoref{thm:positive type} and the equivariance of the unital, completely positive map $\iota\colon C(X)\to C(S(C(X)\rtimes G))$ given by $\iota(f)(\varphi)\coloneqq \varphi(f)$, for all $f\in C(X)$ and $\varphi\in S(C(X)\rtimes G)$.
For the same reason, there is no analogue of \eqref{item:trac amen} $\Leftrightarrow$ \eqref{item:noncentral} for amenable actions.

The key tool that allows to obtain these equivalences in the tracial setting is  \cite[Theorem 3]{Ozawa_Dixmier_2013}.
\end{rem}

Given an amenable action $\alpha \colon G \to \text{Aut}(A)$ of a discrete group on a unital C$^*$-algebra, it is not hard to show that 
there exists an invariant state on $A$ if and only if $G$ is amenable. The next lemma, which will be needed in \autoref{S.4}, shows that the tracial counterpart of this statement also holds.

\begin{lma}\label{cor:NoInvariantTraces}
	Let $\alpha\colon G\to \mathrm{Aut}(A)$ be a tracially amenable action of a discrete group on a unital C$^*$-algebra with $T(A)\not= \emptyset$. Then $G$ is amenable if and only if $A$ has an invariant trace. 
\end{lma}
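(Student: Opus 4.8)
The plan is to treat the two implications separately; tracial amenability of $\alpha$ is needed only for the implication ``$A$ has an invariant trace $\Rightarrow$ $G$ is amenable'', while the reverse implication is a soft consequence of amenability and of $T(A)\neq\emptyset$.

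For the reverse implication, suppose $G$ is amenable. The formula $(g\cdot\tau)(a)=\tau(\alpha_{g^{-1}}(a))$ defines an action of $G$ on $T(A)$ by affine, weak$^*$-continuous homeomorphisms, and $T(A)$ is a nonempty, weak$^*$-compact, convex subset of the dual of $A$. By Day's fixed-point theorem for amenable groups, this action has a fixed point, which is precisely a $G$-invariant trace. Note that this direction does not use tracial amenability.

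For the forward implication, I would fix a $G$-invariant trace $\tau\in T(A)$, so that $\tau\circ\alpha_g=\tau$ for all $g\in G$, and extract from the approximating vectors of tracial amenability an approximately invariant (Reiter) net of densities on $G$. Given finite $K\subseteq G$ and $\varepsilon>0$, \autoref{df:TracialAmenAct} (with $F=\emptyset$) produces $\xi\in C_c(G,A)$ with $\|\xi\|\leq 1$, $\|\langle\xi,\xi\rangle-1\|_{2,u}<\varepsilon$, and $\|\widetilde{\alpha}_g(\xi)-\xi\|_{2,u}<\varepsilon$ for $g\in K$. I define $f_\xi\colon G\to[0,\infty)$ by $f_\xi(h)=\|\xi(h)\|_{2,\tau}^2=\tau(\xi(h)^*\xi(h))$, which is nonnegative and finitely supported. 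Part~(1) of \autoref{lma:Cauchy-Schwarz} gives $\big|\,\|f_\xi\|_1-1\,\big|=|\tau(\langle\xi,\xi\rangle-1)|\leq\|\langle\xi,\xi\rangle-1\|_{2,u}<\varepsilon$, so the total mass is close to $1$.

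The crux is to control the translates. Invariance of $\tau$ yields the identity
\[(g\cdot f_\xi)(h)=f_\xi(g^{-1}h)=\tau\big(\alpha_g(\xi(g^{-1}h)^*\xi(g^{-1}h))\big)=\|\widetilde{\alpha}_g(\xi)(h)\|_{2,\tau}^2,\]
so that $\|g\cdot f_\xi-f_\xi\|_1=\sum_h\big|\,\|\widetilde{\alpha}_g(\xi)(h)\|_{2,\tau}^2-\|\xi(h)\|_{2,\tau}^2\,\big|$. Factoring each summand as $|a^2-b^2|=|a-b|(a+b)$, applying the reverse triangle inequality and the Cauchy--Schwarz inequality for the sum over $h$, and using $\|\widetilde{\alpha}_g(\xi)\|_{2,\tau},\|\xi\|_{2,\tau}\leq 1$, I expect to bound this by $2\|\widetilde{\alpha}_g(\xi)-\xi\|_{2,\tau}\leq 2\|\widetilde{\alpha}_g(\xi)-\xi\|_{2,u}<2\varepsilon$. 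Hence, after normalizing $f_\xi/\|f_\xi\|_1$, the net $(f_\xi)_{(K,\varepsilon)}$ satisfies Reiter's condition, witnessing amenability of $G$. The main obstacle is precisely this $\ell^1$-estimate: passing from $2$-norm control of $\widetilde{\alpha}_g(\xi)-\xi$ to $\ell^1$-control of the translated densities, which hinges on invariance of $\tau$ (to identify the translate of $f_\xi$ with the density of $\widetilde{\alpha}_g(\xi)$) together with the factorization of $|a^2-b^2|$ followed by Cauchy--Schwarz.
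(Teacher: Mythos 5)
Your proof is correct, but for the nontrivial direction (invariant trace $\Rightarrow$ $G$ amenable) it takes a genuinely different route from the paper. The paper deduces this implication from machinery it has already built: by \autoref{thm:TracAm}, tracial amenability implies that the induced action $G\curvearrowright T(A)$ is topologically amenable (the paper notes this implication holds without separability), an invariant trace gives an invariant Dirac measure on $T(A)$, and then a cited result (\cite[Lemma~2.2]{GGKN22}) says that a topologically amenable action admitting an invariant probability measure forces $G$ to be amenable. You instead argue directly from \autoref{df:TracialAmenAct}: the invariant trace $\tau$ converts the approximately invariant vectors $\xi$ into finitely supported densities $f_\xi(h)=\tau(\xi(h)^*\xi(h))$ on $G$, and your estimates are sound --- the identity $(g\cdot f_\xi)(h)=\|\widetilde{\alpha}_g(\xi)(h)\|_{2,\tau}^2$ uses exactly $\tau\circ\alpha_g=\tau$, and the bound
\[
\|g\cdot f_\xi-f_\xi\|_1\leq \|\widetilde{\alpha}_g(\xi)-\xi\|_{2,\tau}\,\Bigl(\|\widetilde{\alpha}_g(\xi)\|_{2,\tau}+\|\xi\|_{2,\tau}\Bigr)\leq 2\varepsilon
\]
follows from the factorization $|a^2-b^2|=|a-b|(a+b)$, the reverse triangle inequality, and Cauchy--Schwarz (with Minkowski for the second factor), since $\sum_h\|\widetilde{\alpha}_g(\xi)(h)-\xi(h)\|_{2,\tau}^2=\|\widetilde{\alpha}_g(\xi)-\xi\|_{2,\tau}^2$. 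Normalizing (possible as $\|f_\xi\|_1>1-\varepsilon$) yields Reiter's condition. What each approach buys: the paper's proof is three lines long because it leans on \autoref{thm:TracAm} and an external lemma, but it needs the parenthetical remark about separability; your argument is self-contained, works verbatim for arbitrary discrete groups and nonseparable algebras, and makes transparent the mechanism (invariant trace $+$ approximate invariance in $\|\cdot\|_{2,u}$ $\Rightarrow$ Reiter functions) that in the paper is hidden inside the proof of the cited lemma. The easy direction (Day's fixed point theorem) is identical in both.
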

\begin{proof}
If $G$ is amenable, then it is well-known that the affine action
$G\curvearrowright T(A)$ has a fixed point. Conversely, assume
that $\tau\in T(A)$ is $G$-invariant. Then the Dirac measure
on $T(A)$ associated to $\tau$ is also $G$-invariant.
Since $G\curvearrowright T(A)$ is amenable by 
\eqref{item:trac amen}$\Rightarrow$\eqref{item:T(A)} of \autoref{thm:TracAm} (the proof of this implication applies verbatim also if $A$ is not separable), it follows from \cite[Lemma~2.2]{GGKN22}
that $G$ is amenable. 	
\end{proof}

\subsection{Examples} \label{ss.classi}
%\cite{Blackadar_Traces_1980, Goodearl_Algebraic_1970}
In this subsection, we use our characterizations of tracial amenability
from \autoref{thm:TracAm} (particularly (7)), 
together with results from the 
classification programme for simple, nuclear C$^*$-algebras, to construct 
a wide class of examples of tracially amenable actions
on unital, simple, stably finite C$^*$-algebras. 
Since we lift automorphisms from the Elliott invariant to the C$^*$-algebra
in question, we work with free groups in order to guarantee that we get 
a group action.
Recall that the \emph{Elliott invariant} of a unital C$^*$-algebra $A$ is 
given by
\[\mathrm{Ell}(A)=((K_0(A),K_0(A)_+,[1_A]), K_1(A), T(A),\rho_A);\]
see \cite[Definition~2.4]{GLN_classification_2015} for its definition.

\begin{eg}\label{eg:TracAmenEll}
Let $n\in \N$, let $F_n=\langle g_1,\ldots,g_n\rangle$ be the free group on $n$ generators, and let $\theta\colon F_n\acton X$ be an amenable action on a compact, metric space (for example, let $X$ be the Gromov boundary $\partial F_n$). By \cite[Theorem~14.8]{GLN_classification_2015} (see also \cite[Theorem~2.8]{EGLN_classification_2016}, or \cite{new_classification}), there is a stably finite, classifiable C$^*$-algebra $A$ with
\[\mathrm{Ell}(A)\cong 
((\mathbb{Z},\mathbb{Z}_+,1),\{0\},\mathrm{Prob}(X),\rho_A),\]
where $\rho_A\colon K_0(A)\times T(A)\to \mathbb{R}$ is the (uniquely determined) pairing map given by $\rho_A(n[1_A],\tau)=n$ for all 
$n\in\Z$ and all $\tau\in T(A)$.
Let $\widehat{\theta}\colon F_n\acton \mathrm{Prob}(X)$ be the action induced by $\theta$. For each $j=1,\ldots,n$, the triple 
$(\mathrm{id}_{K_0(A)}, \mathrm{id}_{K_1(A)}, \widehat{\theta}_{g_j})$ is an automorphism of $\mathrm{Ell}(A)$ in the sense of 
\cite[Definition~2.4]{GLN_classification_2015}, and thus by \cite[Theorem~29.8]{GLN_classification_2015} (see also \cite[Theorem~2.7]{EGLN_classification_2016}, or \cite{new_classification}) there exists an automorphism $\alpha_j\in \mathrm{Aut}(A)$ such that $\mathrm{Ell}(\alpha_j)=(\mathrm{id}_{K_0(A)}, \mathrm{id}_{K_1(A)}, \widehat{\theta}_{g_j})$. Using the universal property of $F_n$, we obtain an action $\alpha\colon F_n\to \mathrm{Aut}(A)$ whose induced action on $\partial_eT(A)\cong X$
is conjugate to $\theta$. Tracial amenability for $\alpha$ then follows directly from the equivalence between 
\eqref{item:trac amen} and \eqref{item:dT(A)}
in \autoref{thm:TracAm}.
\end{eg}

In the above construction there is a significant 
amount of freedom in the choice of the Elliott invariant of the C$^*$-algebra $A$. 
The following variant of the construction above was suggested
to the authors by Mikael R{\o}r\-dam, and it has the advantage of 
readily producing actions on unital, simple AF-algebras.

\begin{eg}\label{prop:ExOnAFAlg}
Let $\theta \colon F_n \curvearrowright X$ be an amenable action on the
Cantor space $X$.
By \cite[Proposition~1.4.5]{Ror_classification_2002}, there is a unital AF-algebra $A$
with
\[(K_0(A),K_0(A)_+,[1])\cong (C(X,\Q^{\mathrm{d}}),C(X,\Q^{\mathrm{d}})_{++}\cup \{0\},1),\]
where $\Q^{\mathrm{d}}$ denotes the rational numbers with the discrete topology, and $C(X,\Q^{\mathrm{d}})_{++}$ denotes the strictly positive continuous functions $X\to \Q^\mathrm{d}$. Since
$(K_0(A),K_0(A)_+)$ is
simple in the sense of \cite[Definition 1.5.1]{Ror_classification_2002},
it follows from 
\cite[Corollary~1.5.4]{Ror_classification_2002} that
$A$ is simple. 

Use Elliott's classification of AF-algebras (in the form stated
in \cite[Theorem~1.3.3~(ii)]{Ror_classification_2002}) to lift the automorphisms of $(K_0(A),K_0(A)_+,[1])$ induced by the 
homeomorphisms $\theta_{g_1},\dotsc,\theta_{g_n}$ to automorphisms $\alpha_{g_1},\dotsc,\alpha_{g_n}$ of $A$. We again obtain an action $\alpha \colon F_n \to \text{Aut}(A)$. 
Since there is a natural continuous affine homeomorphism $T(A)\cong \Prob(X)$ by \cite[Proposition~1.5.5]{Ror_classification_2002},
it follows that the action that $\alpha$ induces on $T(A)$ is
conjugate to $\theta$. Hence $\alpha$ is tracially amenable by \autoref{thm:TracAm}.
\end{eg}

\section{Purely infinite crossed products} \label{S.4}
In this section, we investigate the internal structure of reduced crossed products by tracially amenable actions. For a large class of nonamenable groups, we show that outer, tracially amenable 
actions on simple, unital, $\mathcal{Z}$-stable C$^*$-algebras 
produce simple, purely infinite reduced crossed products.
Our proof proceeds in two steps. The first one is 
\autoref{prop:CompPurInf}, where we show in great generality that dynamical strict comparison is sufficient to 
conclude pure infiniteness of the reduced crossed product.
The second step is
\autoref{thm:Mainthm}, the main result of this work, where we show that
tracial amenability implies dynamical strict comparison for a vast class of actions of nonamenable groups on simple C$^*$-algebras. 

\subsection{From dynamical comparison to pure infiniteness of crossed
products}
We begin by recalling some preliminaries about Cuntz comparison,
and refer the reader to \cite{AntPerThi_tensor_2018, ThielCuntzsemigrp, GP} 
for modern introductions to the subject.

Let $A$ be a C$^*$-algebra. 
For positive elements $a,b\in A_+$, we say that $a$ is 
\emph{Cuntz subequivalent} to $b$, written $a\precsim b$, 
if there exists a sequence $(r_n)_{n\in \N}$ in $A$ such that $\lim_{n \to \infty} \|r_nbr_n^*-a\|= 0$. 

For a unital C$^*$-algebra $A$, we denote by $QT(A)$ the set of (everywhere
defined) normalized (2-)quasitraces on $A$
(\cite[Definition 6.7]{ThielCuntzsemigrp}). We will be mainly interested in C$^*$-algebras where $QT(A) = T(A)$, which is the case, for instance, whenever $A$ is exact (\cite{Haa_quasitraces_2014}).

Given $\tau \in QT(A)$, there exists a unique extension of $\tau$
to a lower semicontinuous (unbounded) quasitrace on $A \otimes \mathcal{K}$. With a slight abuse of notation, we denote such extension again
by $\tau$.
For $a \in (A \otimes \mathcal{K})_+$ and $\tau \in QT(A)$, set
\[
d_\tau(a)\coloneqq \lim_{n\to\infty} \tau(a^{1/n})\in [0,\infty].
\]

The following lemma, which was pointed out to us by Sam Evington, will be needed in the proof of \autoref{thm:Mainthm}.
For $\ep>0$, we denote by $f_\ep\colon [0,\infty)\to [0,\infty)$ the function given by $f_\ep(t)=\max \{ 0,t-\ep \}$ for $t\in [0,\infty)$.
For $a\in A_+$, we will usually write $(a-\ep)_+$ for $f_\ep(a)$.

\begin{lma}\label{lma:Sam}
Let $A$ be a unital C$^*$-algebra, and let $\gamma,\ep>0$.
\begin{enumerate}
 \item\label{clm:continuity} For every $f\in C([0,1])$ there exists $\delta >0$ such that for all contractions $a ,b \in A_+$ and all 
 $\tau \in T(A)$ satisfying $\left\| a - b \right\|_{2,\tau} < \delta$, we have
\[
	 \left\| f(a) - f(b) \right\|_{2,\tau} < \gamma.
\]
\item\label{item:Sam2} There is $\delta>0$ such that 
for all contractions $a ,b \in A_+$ and all 
$\tau \in T(A)$ satisfying $\left\| a - b \right\|_{2,\tau} < \delta$, we have 
\[d_\tau((a-\ep)_+)<d_\tau(b)+\gamma.\] 
\end{enumerate}
\end{lma}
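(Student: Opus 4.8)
The plan is to prove the uniform functional-calculus estimate \eqref{clm:continuity} first, and then to deduce \eqref{item:Sam2} from it by sandwiching the Cuntz data $d_\tau$ between values of continuous functions evaluated at $a$ and $b$.

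For \eqref{clm:continuity}, the crucial feature is that $\delta$ must be uniform over all contractions $a,b\in A_+$ and all $\tau\in T(A)$. First I would reduce to polynomials: given $\gamma>0$, pick by Weierstrass a polynomial $p$ with $\|f-p\|_\infty<\gamma/3$ on $[0,1]$, and note that since $\|x\|_{2,\tau}\le\|x\|$ (part~(1) of \autoref{lma:Cauchy-Schwarz}) we have $\|f(a)-p(a)\|_{2,\tau}\le\|f(a)-p(a)\|<\gamma/3$, and likewise for $b$; so it suffices to control $\|p(a)-p(b)\|_{2,\tau}$. By the triangle inequality this reduces further to monomials $t\mapsto t^n$, and for these I would use the telescoping identity $a^n-b^n=\sum_{k=0}^{n-1}a^k(a-b)b^{n-1-k}$ together with the elementary bound $\|xcy\|_{2,\tau}\le\|x\|\,\|y\|\,\|c\|_{2,\tau}$. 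The latter follows from $z^*x^*xz\le\|x\|^2z^*z$ and the trace identity $\tau(c^*c\,yy^*)\le\|y\|^2\tau(c^*c)$. Since $a,b$ are contractions, each summand is bounded by $\|a-b\|_{2,\tau}$, so $\|a^n-b^n\|_{2,\tau}\le n\,\|a-b\|_{2,\tau}$, with a constant independent of $\tau$ and of $a,b$. Summing these bounds over the finitely many monomials appearing in $p$ yields a constant $C$ depending only on $p$ with $\|p(a)-p(b)\|_{2,\tau}\le C\,\|a-b\|_{2,\tau}$, and choosing $\delta<\gamma/(3C)$ completes \eqref{clm:continuity}.

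For \eqref{item:Sam2}, I would fix a continuous cutoff $g\colon[0,1]\to[0,1]$ with $g\equiv 0$ on $[0,\ep/2]$ and $g\equiv 1$ on $[\ep,1]$. Since $((a-\ep)_+)^{1/n}\le g(a)$ for every $n$, passing to the limit gives $d_\tau((a-\ep)_+)\le\tau(g(a))$; on the other hand $g(b)\le\chi_{(0,1]}(b)$ yields $\tau(g(b))\le d_\tau(b)$. Applying \eqref{clm:continuity} to the single function $g$ produces $\delta>0$ such that $\|a-b\|_{2,\tau}<\delta$ forces $\|g(a)-g(b)\|_{2,\tau}<\gamma$, and then $|\tau(g(a))-\tau(g(b))|\le\|g(a)-g(b)\|_{2,\tau}<\gamma$ by part~(1) of \autoref{lma:Cauchy-Schwarz}. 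Chaining these inequalities gives $d_\tau((a-\ep)_+)\le\tau(g(a))<\tau(g(b))+\gamma\le d_\tau(b)+\gamma$.

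The only genuinely delicate point is the uniformity in $\tau$ (and over all contractions) in \eqref{clm:continuity}; everything hinges on the telescoping estimate $\|a^n-b^n\|_{2,\tau}\le n\,\|a-b\|_{2,\tau}$ having a constant independent of $\tau$ and of $a,b$, which is what makes the polynomial approximation uniform. A minor secondary point is justifying $d_\tau((a-\ep)_+)\le\tau(g(a))$ and $\tau(g(b))\le d_\tau(b)$, which I would handle via the monotone comparisons $((a-\ep)_+)^{1/n}\le g(a)$ and $g(b)\le\chi_{(0,1]}(b)$ rather than invoking spectral projections directly.
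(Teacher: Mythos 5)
Your proposal is correct and follows essentially the same route as the paper: part~(1) rests on the monomial estimate $\|a^n-b^n\|_{2,\tau}\leq n\|a-b\|_{2,\tau}$ (your telescoping identity is exactly what underlies the paper's one-line bound) plus polynomial approximation, and part~(2) compares $d_\tau((a-\ep)_+)\leq\tau(g(a))$ and $\tau(g(b))\leq d_\tau(b)$ via a continuous cutoff, then applies part~(1) and the Cauchy--Schwarz inequality. The only differences are cosmetic (your plateau cutoff versus the paper's $g_\ep(t)=\min\{\ep^{-1}t,1\}$, and operator inequalities versus the spectral measure for the $d_\tau$ comparisons), so there is nothing to correct.
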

\begin{proof}
(1). Suppose that there is $n\in\N$ such that $f(t)=t^n$ for all 
$t\in [0,1]$. Then
\begin{align*}
\|f(a)-f(b)\|_{2,\tau}&\leq \|a-b\|_{2,\tau}\sum_{j=0}^{n-1}\|a\|^j\|b\|^{n-1-j}
\leq n\|a-b\|_{2,\tau}.
\end{align*}
Thus, in this case we may take $\delta=\tfrac{\gamma}{n}$. 
The case where $f$ is a polynomial can be deduced from this, and the
general case follows using a density argument.
\vspace{.2cm}

(2). Let $g_\ep\in C([0,1])$ be the function defined as $g_\ep(t)\coloneqq\min\{\ep^{-1} t,1\}$, for all $t\in [0,1]$. 
Using part~(1), find $\delta>0$ such that whenever $a,b\in A_+$ are contractions satisfying $\|a-b\|_{2,\tau}<\delta$, then $\|g_\ep(a)-g_\ep(b)\|_{2,\tau}<\gamma$. By part~(1) of \autoref{lma:Cauchy-Schwarz}, we have
\begin{equation}\label{eqn:trIneq}
|\tau(g_\ep(a)-g_\ep(b))|\leq \|g_\ep(a)-g_\ep(b)\|_{2,\tau}<\gamma.
\end{equation}
Denote by $\mu$ the measure on the spectrum of $a$ induced by the trace $\tau|_{C^*(1,a)}$, so that
\begin{equation}\label{eqn:dtau}
d_\tau((a-\ep)_+)= \mu\big(\{x\in \mathrm{sp}(a)\colon f_\ep(x)>0\}\big) = \mu((\ep,1]).
\end{equation}
Using that $g_\ep=1$ on $(\ep,1]$ at the second step, we get 
\[d_\tau((a-\ep)_+)\stackrel{\eqref{eqn:dtau}}{=}\mu((\ep,1])\leq \int_{[0,1]} g_\ep d\mu=\tau(g_\ep(a))\stackrel{\eqref{eqn:trIneq}}{\leq} \tau(g_\ep(b))+\gamma \leq d_\tau(b)+\gamma,\] 
as desired.
\end{proof}

For a compact convex set $K$, we denote by $\mathrm{LAff}(K)_+$ (respectively, $\mathrm{LAff}(K)_{++}$) the set of lower semicontinuous positive (respectively, strictly positive) affine functions on $K$.

\begin{df}\label{df:SurRk}
Let $A$ be a unital C$^*$-algebra. For $a\in (A\otimes \mathcal{K})_+$, we define its \textit{rank function} to  be the map $\mathrm{rk}(a)\colon QT(A)\to [0,\infty]$ given by
\[\mathrm{rk}(a)(\tau)=d_\tau(a)\]
for all $\tau\in QT(A)$. 

Given a simple, unital $C^*$-algebra $A$ and $a\in(A\otimes \mathcal{K})_+\setminus\{0\}$, it is always true that $d_\tau(a)>0$, for every $\tau\in QT(A)$. As $\mathrm{rk}(a)$ is a lower semicontinuous map on a compact set, it attains its minimum. It follows that
$\mathrm{rk}(a)\in \mathrm{LAff}(QT(A))_{++}$, for every $a\in(A\otimes \mathcal{K})_+\setminus\{0\}$. We say that \emph{all ranks are realized} in $A$ if the rank map $\mathrm{rk}\colon (A\otimes \mathcal{K})_+\setminus\{0\}\to \mathrm{LAff}(QT(A))_{++}$ is surjective.
\end{df}

One of the assumptions of \autoref{thm:Mainthm} is that all ranks are
realized for the coefficient C$^*$-algebra. This is in fact a very mild
assumption, which is automatic in many cases
of interest. This is for example the case for nonelementary, simple, separable, unital
C$^*$-algebras of stable rank one by \cite[Theorem~8.11]{ThielSR1}.
Moreover, it is an open question of Nate Brown whether all ranks 
are realized in every nonelementary, unital, simple, separable, 
stably finite C$^*$-algebra (see \cite[Question~1.1]{ThielSR1}). 

We will need the notion of strict comparison. Although this is most commonly
defined using tracial states, we choose to do it using normalized quasitraces 
since this version is more compatible with the Cuntz semigroup outside of the 
exact setting.

\begin{df}
A unital, simple C$^*$-algebra $A$ is said to have 
\textit{strict comparison} if we have $a\precsim b$ whenever $a,b\in (A\otimes \mathcal K)_+\setminus\{0\}$ 
satisfy $\mathrm{rk}(a)<\mathrm{rk}(b)$. 
\end{df}

\begin{rem}\label{rem:StrCompPI}
When $A$ is a simple, unital C$^*$-algebra, and $QT(A)=\emptyset$, then strict comparison is equivalent to the fact that $1\precsim a$ for all nonzero $a\in (A\otimes\K)_+$. This, in turn, is equivalent to pure infiniteness of $A$ (see \cite[Definition~4.1]{KirRor_nonsimple_2000} and \cite{Cun_1981}).
\end{rem}

The following definition originates from an ongoing project by Bosa, Perera, Wu and Zacharias,
and it provides a noncommutative generalization of the notion of dynamical subequivalence from \cite[Definition~3.2]{Ker_dimension_2020}.

\begin{df}\label{df:DynSubEq}
Let $\alpha\colon G\to \mathrm{Aut}(A)$ be an action of a countable, discrete group $G$ on a C$^*$-algebra $A$, and let $a,b\in A_+$.
\begin{enumerate}
\item Write $a\precsim_0 b$ if for any $\ep>0$ there are $\delta>0$, $n\in\mathbb{N}$, $g_1,\ldots,g_n\in G$, and positive elements $x_1,\ldots, x_n\in M_\infty(A)$, such that 
\[(a-\ep)_+\precsim \oplus_{j=1}^{n} \alpha_{g_j}(x_j) \quad \text{ and } \quad \oplus_{j=1}^{n}x_j\precsim (b-\delta)_+.\] 

\item Write $a\precsim_G b$ if there are $x_1,\ldots,x_n\in M_\infty(A)_+$ with
\[a=x_1\precsim_0 x_2\precsim_0 \ldots \precsim_0 x_n=b. \] 
\end{enumerate}
\end{df}

The need for (2) stems for the fact that, unlike for $\precsim$, 
it is not clear whether the relation $\precsim_0$ is transitive.

\begin{rem}\label{rem: CompNoepsDel}
	Assume that there are $n\in\mathbb{N}$, elements $g_1,\ldots,g_n\in G$, and positive elements $x_1,\ldots, x_n\in M_\infty(A)$ such that 
	\[a\precsim \oplus_{j=1}^{n} \alpha_{g_j}(x_j) \quad \text{ and } \quad \oplus_{j=1}^{n}x_j\precsim b.\] 
	Then $a\precsim_0 b$. This follows from double application of \cite[Proposition~2.4]{Ror_1992}.
\end{rem}

\begin{rem}\label{rem: DyncomVScom}
	One can check that $a\precsim_G b$ entails $a\precsim b$ in $A\rtimes_r G$, by putting together \cite[Proposition~2.4]{Ror_1992}, \cite[Proposition~2.18]{ThielCuntzsemigrp}, and the fact that the action on $A$ is unitarily implemented inside $A\rtimes_r G$.
\end{rem}

The notion of dynamical strict comparison below 
is due to Bosa, Perera, Wu and Zacharias, as part of their ongoing
work on almost elementariness extending 
the ideas and techniques developed in \cite{Ker_dimension_2020}
to the noncommutative setting.

\begin{df}\label{df:DynCom}
An action $\alpha\colon G\to \mathrm{Aut}(A)$ of a countable, discrete group $G$ on a unital C$^*$-algebra $A$ is said to have \textit{dynamical strict comparison} if we have $a\precsim_G b$ for all $a,b\in (A\otimes\K)_+\setminus\{0\}$ 
satisfying $d_\tau(a)<d_\tau(b)$ for all invariant quasitraces $\tau \in QT(A)$. 
\end{df}

In the definition above, we deliberately allow for the possibility that there are no
invariant quasitraces, in which case 
dynamical strict comparison is equivalent to the 
statement that
$a\precsim_G b$ for all nonzero $a,b\in (A\otimes\mathcal{K})_+$. 

Note that when $\alpha$ is trivial, 
dynamical strict comparison for $\alpha$
is equivalent to strict comparison for $A$. 
The following is a generalization of \cite[Theorem~1.1]{Ma_comparison_2019} to the noncommutative setting (see also \cite[Theorem~2.8]{GGKN22}). Recall that an action
$\alpha\colon G\to\Aut(A)$ is said to be \emph{outer} if
$\alpha_g$ is not inner for all $g\in G\setminus\{1\}$.

\begin{prop}\label{prop:CompPurInf}
Let $\alpha\colon G\to \mathrm{Aut}(A)$ be an outer action of a 
countable, discrete group $G$ on a unital, simple C$^*$-algebra $A$. 
Assume that $\alpha$ has dynamical strict comparison and no 
invariant quasitraces. Then $A\rtimes_r G$ is simple and purely 
infinite.
\end{prop}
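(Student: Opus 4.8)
The plan is to combine the two hypotheses into a short comparison argument: dynamical strict comparison, which in the absence of invariant quasitraces yields $x\precsim_G y$ for \emph{all} nonzero $x,y\in(A\otimes\K)_+$ (as noted after \autoref{df:DynCom}), and outerness of $\alpha$, which controls how $A$ sits inside $B\coloneqq A\rtimes_r G$. First I would record simplicity of $B$: since $A$ is simple and $\alpha$ is outer, this is Kishimoto's theorem \cite{Kishimoto}. It then remains to prove pure infiniteness, and since $B$ is simple and unital this reduces to showing $a\precsim b$ in $B$ for every pair of nonzero $a,b\in B_+$; by the usual scaling argument ($a\precsim\|a\|1_B\sim 1_B$) it suffices to establish that $1_B\precsim a$ for every nonzero $a\in B_+$. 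One may cross-check the framing via \autoref{rem:StrCompPI}: any normalized quasitrace on $B$ restricts to an $\alpha$-invariant normalized quasitrace on $A$, so the absence of invariant quasitraces forces $QT(B)=\E$, and then \autoref{rem:StrCompPI} identifies pure infiniteness of $B$ precisely with this comparison property.

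The heart of the argument, and the step I expect to be the main obstacle, is a \emph{Cuntz-domination} lemma: every nonzero $a\in B_+$ Cuntz-dominates a nonzero positive element of $A$, i.e. there is $c\in A_+\setminus\{0\}$ with $c\precsim a$ in $B$. I would prove this via Kishimoto's averaging technique. Normalizing $\|a\|=1$ and using the faithful conditional expectation $E\colon B\to A$ (so that $a_e\coloneqq E(a)\neq 0$), approximate $a$ in norm by a finitely supported $a'=\sum_{g\in F}a_g u_g$ with $a_e=E(a)$. Kishimoto's lemma (outerness together with simplicity of $A$) then produces, for each $\ep>0$, a positive contraction $d\in\overline{a_e A a_e}$ with $\|d a_e d\|>\|a_e\|-\ep$ and $\|d a_g\alpha_g(d)\|<\ep$ for all $g\in F\setminus\{e\}$. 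Compressing by $d$ annihilates the off-diagonal terms, so $dad$ lies within $(|F|+1)\ep$ of the nonzero element $d a_e d\in A_+$, and a standard application of \cite[Proposition~2.4]{Ror_1992} yields a nonzero $c\coloneqq(d a_e d-\eta)_+\in A_+$ with $c\precsim dad\precsim a$. The delicate point is exactly the use of outerness to make the off-diagonal contributions uniformly small while keeping $d a_e d$ large.

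With such a $c$ in hand the conclusion is immediate. Since $\alpha$ has no invariant quasitraces, dynamical strict comparison (\autoref{df:DynCom}) applies vacuously to the pair $1_A,c\in A_+\subseteq(A\otimes\K)_+$, giving $1_A\precsim_G c$; by \autoref{rem: DyncomVScom} this descends to $1_A\precsim c$ in $B$. Combining the two steps gives $1_B=1_A\precsim c\precsim a$, hence $1_B\precsim a$ for every nonzero $a\in B_+$, which by the reduction above shows that $B$ is purely infinite. I note finally that the Cuntz-domination lemma alone already re-proves simplicity: any nonzero ideal of $B$ contains a nonzero positive $a$, hence the associated $c\in A_+$, so it meets $A$ in a nonzero $\alpha$-invariant ideal, which by simplicity of $A$ forces the ideal to be all of $B$.
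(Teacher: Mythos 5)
Your proposal is correct and follows essentially the same route as the paper's proof: simplicity via Kishimoto, reduction of pure infiniteness to showing $1\precsim a$ for every nonzero $a\in (A\rtimes_r G)_+$, production of a nonzero $c\in A_+$ with $c\precsim a$, and then dynamical strict comparison (vacuous in the absence of invariant quasitraces) together with \autoref{rem: DyncomVScom} to conclude $1\precsim c\precsim a$. The only difference is that your ``Cuntz-domination lemma'' is precisely \cite[Lemma~3.2]{RordamSierakowski}, which the paper cites as a black box and whose standard Kishimoto-averaging proof you sketch correctly.
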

\begin{proof}
Simplicity follows from \cite[Theorem~3.1]{Kishimoto}. 
Let $a\in A\rtimes_r G$ be a nonzero positive element. We will show that $1 \precsim a$ in $A\rtimes_r G$, which implies that $A\rtimes_r G$ is purely infinite by \autoref{rem:StrCompPI}. 
By \cite[Lemma~3.2]{RordamSierakowski} (see also \cite[Lemma~4.2]{ESPA})
there exists a nonzero element $b\in A_+$ such that $b\precsim a$. 
In the absence of invariant quasitraces, dynamical strict comparison gives $1\precsim_G b$. By \autoref{rem: DyncomVScom}, we have that $1\precsim b\precsim a$ in $A\rtimes_r G$, as desired.
\end{proof}

\subsection{Groups with weak paradoxical towers}
Next, we introduce the notion of weak paradoxical towers, based on ideas from \cite{GGKN22}. This is a technical assumption
which allows us to link tracial amenability to strict comparison. The class of groups having weak paradoxical towers is
quite large, and it for example includes
all groups containing the free group $F_2$; see \autoref{cor:GrpsContF2}.

\begin{df}\label{df:WeakParTwrs}
	Let $n\in \N$. A group $G$ is said to have \emph{weak $n$-paradoxical towers} if for every $m\in \N$, there exist a finite subset $D\subseteq G$ with $|D|\geq m$, subsets $K_1,\dotsc K_n\subseteq G$, and group elements $g_1,\dotsc,g_n\in G$ such that
	\begin{enumerate}
		\item For each $j=1,\dotsc,n$, the sets $\{dK_j\}_{d\in D}$ are pairwise disjoint;
		\item $\bigcup_{j=1}^n g_j K_j=G$.
	\end{enumerate}
	A group is said to have \emph{weak paradoxical towers} if it has weak $n$-paradoxical towers for some $n\in \N$. 
\end{df}

We make the following observation:
\begin{lma}\label{lma:WkParNonAm}
Let $G$ be a group with weak paradoxical towers. Then $G$ is not amenable. 
\end{lma}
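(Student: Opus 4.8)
The plan is to argue by contradiction, using the characterization of amenability via the existence of a left-invariant, finitely additive probability measure $\mu$ on $G$ (equivalently, a left-invariant mean on $\ell^\infty(G)$). Suppose $G$ has weak $n$-paradoxical towers for some fixed $n\in\N$, and assume toward a contradiction that $G$ is amenable; fix such a measure $\mu$. The whole argument is then a short counting estimate that plays condition~(1) of \autoref{df:WeakParTwrs} against condition~(2), exactly in the spirit of how a paradoxical decomposition obstructs an invariant mean.

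First I would exploit the disjointness in condition~(1). Apply the definition with $m=n+1$ to obtain a finite set $D$ with $|D|\geq n+1$, subsets $K_1,\dotsc,K_n$, and elements $g_1,\dotsc,g_n$. Since left translation preserves $\mu$, each translate satisfies $\mu(dK_j)=\mu(K_j)$; as the sets $\{dK_j\}_{d\in D}$ are pairwise disjoint and contained in $G$, finite additivity gives
\[
1=\mu(G)\geq \mu\Big(\bigsqcup_{d\in D} dK_j\Big)=\sum_{d\in D}\mu(dK_j)=|D|\,\mu(K_j),
\]
so that $\mu(K_j)\leq 1/|D|\leq 1/(n+1)$ for every $j=1,\dotsc,n$.

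Next I would use the covering in condition~(2). Since $\bigcup_{j=1}^n g_jK_j=G$, monotonicity together with subadditivity and left invariance of $\mu$ yields
\[
1=\mu(G)\leq \sum_{j=1}^n \mu(g_jK_j)=\sum_{j=1}^n \mu(K_j)\leq \frac{n}{n+1}<1,
\]
which is the desired contradiction, so no invariant mean exists and $G$ is not amenable. There is no genuine obstacle in this proof: the only point requiring care is that $n$ is fixed by the hypothesis while $m$ ranges freely over $\N$, so one is entitled to choose $m$ strictly larger than $n$ (here $m=n+1$). Everything else is the standard finitely-additive bookkeeping showing that a paradoxical-type decomposition is incompatible with a left-invariant mean.
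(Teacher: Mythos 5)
Your proof is correct: the counting argument with a left-invariant finitely additive probability measure, pitting the disjointness of the translates $\{dK_j\}_{d\in D}$ (which forces $\mu(K_j)\leq 1/|D|$) against the covering $\bigcup_{j=1}^n g_jK_j=G$ (which forces $\sum_j \mu(K_j)\geq 1$), is exactly the standard obstruction, and your choice $m=n+1$ handles the only delicate quantifier issue. The paper itself states this lemma as an observation without proof, and your argument is precisely the elementary one the authors evidently had in mind.
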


Using elementary arguments, one can show that all nonabelian free
groups have weak 2-paradoxical towers; see 
\cite[Proposition~3.2]{GGKN22} for a stronger result. 
More generally, groups that have paradoxical towers in the sense of 
\cite[Definition~3.1]{GGKN22} have weak paradoxical towers in our
sense, and in particular all the groups considered in 
\cite[Section 4]{GGKN22} have weak paradoxical towers. The class 
of groups we consider here is really larger than the one 
considered in \cite{GGKN22}: for example 
$F_2\times \Z$ does not have paradoxical towers by 
\cite[Remark~3.7]{GGKN22}, but it has \emph{weak} paradoxical
towers by \autoref{cor:GrpsContF2} below. 
The key difference is that weak paradoxical
towers can be induced from subgroups:

\begin{prop}\label{prop:SubgrpWeakPar}
	Let $G$ be a group and $H\subseteq G$ a subgroup with weak $n$-paradoxical towers for some $n\in \N$. Then $G$ has weak $n$-paradoxical towers.
\end{prop}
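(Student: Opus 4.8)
The plan is to lift the paradoxical data from $H$ to $G$ by \emph{thickening} the sets $K_j$ along a transversal for the right cosets of $H$. First I would fix a set $S\subseteq G$ of representatives for the right cosets, so that $G=\bigsqcup_{s\in S}Hs$. Given $m\in\N$, I would apply weak $n$-paradoxical towers for $H$ to obtain a finite set $D\subseteq H$ with $|D|\geq m$, subsets $K_1,\dots,K_n\subseteq H$, and elements $g_1,\dots,g_n\in H$ satisfying the two conditions of \autoref{df:WeakParTwrs} inside $H$. I would then set $\widetilde K_j\coloneqq K_jS\subseteq G$, keep the same elements $g_j$, and reuse the same finite set $D$, now regarded as a subset of $G$. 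Note that $|D|\geq m$ still holds, and since \autoref{df:WeakParTwrs} places no finiteness constraint on the sets $K_j$, enlarging them to the (possibly infinite) sets $K_jS$ is harmless.

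The covering condition would then be immediate: using $\bigcup_{j=1}^n g_jK_j=H$ and $HS=G$, one computes
\[
\bigcup_{j=1}^n g_j\widetilde K_j=\Big(\bigcup_{j=1}^n g_jK_j\Big)S=HS=G.
\]
This confirms condition~(2) of \autoref{df:WeakParTwrs} for the proposed data in $G$.

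For the disjointness condition, I would fix $j$ and $d\neq d'$ in $D$ and show $d\widetilde K_j\cap d'\widetilde K_j=\varnothing$. Suppose some $x$ lies in this intersection, say $x=dks=d'k's'$ with $k,k'\in K_j$ and $s,s'\in S$. Since $D\subseteq H$ and $K_j\subseteq H$, both $dk$ and $d'k'$ belong to $H$, so $x\in Hs\cap Hs'$; because $S$ is a transversal this forces $s=s'$, and cancelling $s$ on the right yields $dk=d'k'$. But then $dk\in dK_j\cap d'K_j$, contradicting the pairwise disjointness of $\{dK_j\}_{d\in D}$ in $H$. Hence the sets $\{d\widetilde K_j\}_{d\in D}$ are pairwise disjoint, establishing condition~(1).

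The only point requiring care is the choice to thicken on the \emph{right} by a right-coset transversal: both defining conditions are phrased via \emph{left} translations, and the key compatibility is that left multiplication by elements of $H$ preserves each right coset $Hs$, which is precisely what keeps disjointness intact across the decomposition $G=\bigsqcup_s Hs$. I do not expect a genuine obstacle beyond making this left/right bookkeeping precise; the argument uses no structure on $H$ beyond the abstract towers and no finiteness of the transversal $S$.
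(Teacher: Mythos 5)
Your proof is correct and is essentially identical to the paper's: the paper also picks a right-coset transversal $S$ with $G=\bigsqcup_{s\in S}Hs$, thickens the tower sets to $K_jS$, and keeps the same $D$ and $g_j$. Your verification of the covering and disjointness conditions is exactly the content the paper leaves as "one readily checks."
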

\begin{proof}
	Let $m\in \N$. Since $H$ has weak $n$-paradoxical towers, there are a finite set $D\subseteq H$ with $|D|\geq m$, subsets $L_1,\dotsc,L_n\subseteq H$, and elements $g_1,\dotsc,g_n\in H$, such that the sets $\{d L_j\}_{d\in D}$ are pairwise disjoint for every $j=1,\dotsc,n$, and such that $H=\bigcup_{j=1}^ng_j L_j$. Let $S\subseteq G$ be a set satisfying $G=\bigsqcup_{s\in S}Hs$, and set $K_j\coloneqq \bigsqcup_{s\in S}L_js,~j=1,\dotsc,n$. One readily checks that $\bigcup_{j=1}^ng_jK_j=G$, and that the sets $\{dK_j\}_{d\in D}$ are pairwise disjoint, for every $j=1,\ldots,n$.
\end{proof}

A combination of \autoref{prop:SubgrpWeakPar} with \cite[Proposition~3.2]{GGKN22} gives the following corollary.

\begin{cor}\label{cor:GrpsContF2}
Every group containing $F_2$ has 
weak paradoxical towers. 
\end{cor}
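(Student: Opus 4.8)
The plan is to reduce the whole statement to the single group $F_2$ and then invoke the subgroup permanence property already established in \autoref{prop:SubgrpWeakPar}. Suppose we know that $F_2$ has weak $n$-paradoxical towers for some $n \in \N$. Since having weak $n$-paradoxical towers is a purely group-theoretic, hence isomorphism-invariant, property, any copy $H \cong F_2$ sitting inside $G$ is a subgroup of $G$ with weak $n$-paradoxical towers. Then \autoref{prop:SubgrpWeakPar} applies directly and yields that $G$ itself has weak $n$-paradoxical towers, and in particular weak paradoxical towers, which is exactly the assertion. Thus all the content lies in the base case $G = F_2$.

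To handle the base case, I would invoke \cite[Proposition~3.2]{GGKN22}, which shows that every nonabelian free group — in particular $F_2$ — has weak $2$-paradoxical towers. The underlying mechanism is the classical paradoxical decomposition of $F_2 = \langle a,b\rangle$: writing $W(x)$ for the set of reduced words beginning with the letter $x$, one has the two-piece cover $F_2 = W(a^{-1}) \cup a^{-1}W(a)$. Taking $K_1 = W(a^{-1})$, $K_2 = W(a)$, $g_1 = e$, and $g_2 = a^{-1}$ meets condition (2) of \autoref{df:WeakParTwrs} with $n = 2$. For condition (1), one lets $D$ be a set of powers of the other generator, say $D = \{b^k : 0 \le k < m\}$; since distinct powers $b^k$ force distinct reduced-word prefixes, the families $\{dW(a^{-1})\}_{d\in D}$ and $\{dW(a)\}_{d\in D}$ are each pairwise disjoint, while $|D| = m$ is arbitrarily large. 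This produces weak $2$-paradoxical towers.

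Combining these two ingredients gives the corollary at once. The proof carries no genuine obstacle: the only nontrivial input is the paradoxicality of the free group, imported from \cite[Proposition~3.2]{GGKN22}, and the passage from $F_2$ to an arbitrary overgroup is precisely \autoref{prop:SubgrpWeakPar}. The one point worth noting is that \autoref{prop:SubgrpWeakPar} preserves both the order $n$ and the lower bound $|D| \geq m$, so that weak \emph{$2$}-paradoxical towers for $F_2$ transfer to weak $2$-paradoxical towers for $G$, rather than to towers of some uncontrolled order; this is immediate from the construction in that proof, where only the $K_j$ are enlarged to $\bigsqcup_{s\in S} L_j s$ while $D$, $n$, and the $g_j$ are left untouched.
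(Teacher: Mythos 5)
Your proof is correct and follows exactly the paper's argument: the paper also obtains this corollary by combining \autoref{prop:SubgrpWeakPar} with \cite[Proposition~3.2]{GGKN22}. Your additional explicit construction of weak $2$-paradoxical towers in $F_2$ (via $F_2 = W(a^{-1}) \cup a^{-1}W(a)$ and $D = \{b^k : 0 \le k < m\}$) is a sound elaboration of the base case that the paper leaves to the cited reference.
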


\subsection{Establishing dynamical comparison from tracial
amenability}
We are now ready to prove the main result of this section.
As before, given $\ep>0$ we denote by $f_\ep\colon [0,\infty)\to [0,\infty)$ the function $f_\ep(t)\coloneqq \max\{0,t-\ep\}$ for $t>0$, and for a positive element $a$ we write $(a-\ep)_+$ for 
$f_\ep(a)$. Since functional calculus commutes with automorphisms,
for $\varphi\in\Aut(A)$ we have $\varphi((a-\ep)_+)=(\varphi(a)-\ep)_+$.

\begin{thm}\label{thm:Mainthm}
	Let $G$ be a countable, discrete group with weak paradoxical towers, let $A$ be a unital, simple, separable C$^*$-algebra with $QT(A)=T(A)\neq \emptyset$, with strict comparison, and for which all ranks are realized. Then, any tracially amenable action $\alpha: G\to\mathrm{Aut}(A)$ has dynamical strict comparison.
\end{thm}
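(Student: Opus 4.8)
The plan is to first dispose of the role of invariant traces, then to reduce the statement to a single pair of rank inequalities, and finally to resolve those inequalities by playing tracial amenability against the paradoxical towers. Since $G$ has weak paradoxical towers it is nonamenable by \autoref{lma:WkParNonAm}, so by \autoref{cor:NoInvariantTraces} the tracially amenable action $\alpha$ admits no invariant trace; as $QT(A)=T(A)$, there are in particular no invariant quasitraces. Hence the hypothesis ``$d_\tau(a)<d_\tau(b)$ for all invariant quasitraces'' in \autoref{df:DynCom} is vacuous, and what I must prove is simply that $a\precsim_G b$ for \emph{all} nonzero $a,b\in(A\otimes\K)_+$. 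It suffices to produce, for each such pair, a single intermediate witness giving $a\precsim_0 b$ (a length-two chain $a=x_1\precsim_0 x_2=b$); and, replacing $a$ by a finite-rank cut-down $(a-\varepsilon)_+$ which lies in some $M_N(A)$, I may assume $R_a:=\sup_\tau d_\tau(a)<\infty$, while $\beta:=\inf_\tau d_\tau((b-\delta)_+)>0$ for small $\delta$ by lower semicontinuity, compactness of $T(A)$, and simplicity of $A$.

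Next I would reduce $a\precsim_0 b$ to a purely rank-theoretic statement. By \autoref{df:DynSubEq} it is enough to find $g_1,\dots,g_n\in G$ and positive elements $x_1,\dots,x_n$ with $(a-\varepsilon)_+\precsim\bigoplus_j\alpha_{g_j}(x_j)$ and $\bigoplus_j x_j\precsim(b-\delta)_+$. Because $A$ has strict comparison, each domination is governed by the rank functions $\mathrm{rk}(x_j)=d_{(\cdot)}(x_j)\in\mathrm{LAff}(T(A))_{++}$, and because all ranks are realized I am free to \emph{prescribe} $\mathrm{rk}(x_j)$ to be any lower semicontinuous, strictly positive affine function $r_j$ on $T(A)$. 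Using $d_\tau(\alpha_{g}(x))=d_{\tau\circ\alpha_{g}}(x)$, the two Cuntz dominations follow once I exhibit affine $r_j$ and elements $g_j$ with
\[\sum_{j=1}^n r_j(\tau\circ\alpha_{g_j})>R_a\quad\text{and}\quad\sum_{j=1}^n r_j(\tau)<\beta\qquad\text{for all }\tau\in T(A).\]
Thus the theorem is reduced to establishing this pair of inequalities.

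Now comes the tension the hypotheses are designed to exploit. Fix $m$ large and take the data $D,K_1,\dots,K_n,g_1,\dots,g_n$ from weak $n$-paradoxical towers, so that $\bigcup_j g_jK_j=G$ (a \emph{covering}) and the translates $\{dK_j\}_{d\in D}$ are disjoint with $|D|\ge m$ (a \emph{large disjoint tower}). By \autoref{thm:TracAm}, tracial amenability gives topological amenability of $G\curvearrowright T(A)$, and concretely (via the non-central characterization of \autoref{thm:TracAm}) vectors $\xi\in C_c(G,A)$ with $\|\xi\|\le1$, $\langle\xi,\xi\rangle\approx1$ and $\widetilde{\alpha}_g(\xi)\approx\xi$ in $\|\cdot\|_{2,u}$ for $g$ in any prescribed finite set. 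Writing $c_K:=\langle\xi|_K,\xi|_K\rangle$ for the restriction of $\xi$ to $K\subseteq G$, the functions $\rho_j(\tau):=\tau(c_{K_j})$ are automatically \emph{affine} and continuous, which is precisely what the realization of ranks demands. The covering yields the first inequality: $\sum_j\tau(c_{g_jK_j})\ge\tau(\langle\xi,\xi\rangle)\approx1$, and approximate invariance turns $\tau(c_{g_jK_j})$ into $\rho_j(\tau\circ\alpha_{g_j})$ up to a small error, so that scaling $r_j=\lambda\rho_j+\varepsilon_0$ with $\lambda\approx R_a$ gives $\sum_j r_j(\tau\circ\alpha_{g_j})>R_a$. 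The disjoint tower is what should make the $r_j$ small: since $\sum_{d\in D}c_{dK_j}\le\langle\xi,\xi\rangle$, one obtains $\sum_{d\in D}\tau(c_{dK_j})\le1$, i.e.\ the $\rho_j$ are small \emph{on average over the tower}.

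The main obstacle is exactly here. Disjointness controls the orbit average $\sum_{d\in D}\rho_j(\tau\circ\alpha_d)$, whereas the second inequality demands the \emph{pointwise} bound $\sum_j r_j(\tau)<\beta$ at every individual $\tau$, and a single amenability vector has no reason to distribute its mass uniformly along $D$-orbits. To overcome this I would not argue with a fixed $\xi$ on $T(A)$, but instead pass to the extreme boundary $\overline{\partial_e T(A)}$, on which $G$ acts topologically amenably by \autoref{thm:TracAm}; there the paradoxical combinatorics can be run as an open-cover (rather than fixed-measure) argument in the spirit of \cite{GGKN22}, producing the estimates uniformly over $\overline{\partial_e T(A)}$. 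I would then transport these boundary estimates back to genuinely affine, lower semicontinuous functions $r_j$ on all of $T(A)$ by barycentric extension, using the equivariance of boundary measures $\mu_{\tau\circ\alpha_g}=\mu_\tau\circ\alpha_g^*$ recorded in the proof of \autoref{thm:TracAm} to ensure that both the covering inequality (which is evaluated at the shifted traces $\tau\circ\alpha_{g_j}$) and the smallness inequality survive the extension. Finally, realizing the $r_j$ as elements $x_j$ and absorbing the accumulated errors into the slack afforded by $\varepsilon,\delta,\varepsilon_0$ and the freedom to let $m\to\infty$, I obtain the two Cuntz dominations, hence $a\precsim_0 b$, and therefore dynamical strict comparison.
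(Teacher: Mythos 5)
Your opening moves match the paper's: kill invariant (quasi)traces via \autoref{lma:WkParNonAm} and \autoref{cor:NoInvariantTraces}, then play the covering part of the towers against the disjointness part to get rank inequalities. But the reduction you then commit to is not merely hard to complete --- it is internally contradictory. You ask for strictly positive affine functions $r_1,\dots,r_n$, with $n$ the paradoxicality constant of $G$ and $g_1,\dots,g_n$ the covering elements of the tower, such that $\sum_{j=1}^n r_j(\tau\circ\alpha_{g_j})>R_a$ and $\sum_{j=1}^n r_j(\tau)<\beta$ for all $\tau\in T(A)$. Evaluating the second inequality at the trace $\tau\circ\alpha_{g_j}$ (which lies in $T(A)$) and using positivity of the $r_i$ gives $r_j(\tau\circ\alpha_{g_j})<\beta$ for every $j$, hence $\sum_j r_j(\tau\circ\alpha_{g_j})<n\beta$. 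So your two inequalities cannot both hold once $R_a\geq n\beta$. Since $n$ is fixed by the group while $R_a/\beta$ is unbounded --- already for $a=1$ one has $R_a=1$ while $\beta=\inf_\tau d_\tau((b-\delta)_+)$ can be made arbitrarily small, e.g.\ by realizing a constant rank $1/k$ --- no construction of the $r_j$, on $T(A)$, on $\overline{\partial_e T(A)}$, via open covers, or via barycentric extension, can succeed. The obstacle you flag (orbit-average smallness versus pointwise smallness) is real, but it is not the root problem: any single $\precsim_0$ step whose first domination uses only the $n$ covering translates can bridge a rank ratio of at most $n$, so your ``length-two chain'' is structurally too short.

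What is missing is a mechanism that lets the tower $D$, whose size is unbounded, do the compressing, and in the paper this forces a chain of two $\precsim_0$ steps, $1\precsim_0 (b-\varepsilon)_+^{\oplus 3n}\precsim_0 a$, where $b=\oplus_{j=1}^n\big(c_j-\tfrac{1}{2n}\big)_+$ with $c_j=\sum_{g\in K_j}\xi(g)^*\xi(g)$ is an auxiliary element built from the amenability vector (the paper first reduces dynamical strict comparison to $1\precsim_G a$ for $a\in A_+\setminus\{0\}$). The first step uses only the covering elements $g_j$ and strict comparison, and is where the bounded factor $3n$ is paid. The second step is where your obstacle is resolved, and the resolution is precisely the hypothesis your proposal leaves essentially idle: realization of ranks, applied to the \emph{translates} of $b$. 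Disjointness of $\{dK_j\}_{d\in D}$ gives the bound $\sum_{d\in D}\sum_j d_\tau(\alpha_d((b_j-\varepsilon)_+))\leq 2n^2+1$, valid pointwise in $\tau$; one then manufactures elements $b'_d$ with $\mathrm{rk}(b'_d)=\tfrac{3n}{|D|-1}\,\mathrm{rk}(\alpha_d((b-\varepsilon)_+))$ and exploits the group twist allowed in \autoref{df:DynSubEq} together with the exact averaging identity $d_\tau((b-\varepsilon)_+)=\tfrac{1}{|D|}\sum_{d\in D}d_{\tau\circ\alpha_{d^{-1}}}(\alpha_d((b-\varepsilon)_+))$ to obtain, by strict comparison, both $(b-\varepsilon)_+^{\oplus 3n}\precsim\oplus_{d\in D}\alpha_{d^{-1}}(b'_d)$ and $\oplus_{d\in D}b'_d\precsim a$, the latter because $\tfrac{3n}{|D|-1}(2n^2+1)<\inf_\tau d_\tau(a)$ once $|D|$ is large. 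This is how an average bound over the tower becomes a pointwise Cuntz comparison: not by improving the amenability vector or relocating the argument to the boundary, but by prescribing new elements whose ranks are small multiples of the translated ranks. Your boundary/barycentric route would in any case face a further problem: the barycentric extension of a continuous function on $\overline{\partial_e T(A)}$ need not be lower semicontinuous when $T(A)$ is not a Bauer simplex, so the extended functions need not be realizable as ranks at all.
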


\begin{proof}
Since $G$ has weak paradoxical towers, it is not amenable 
by \autoref{lma:WkParNonAm} and thus has no invariant traces by \autoref{cor:NoInvariantTraces}. In particular, 
dynamical strict comparison for $\alpha$ amounts to showing that 
$1\precsim_G a$ for every nonzero $a\in A_+$. Fix such $a$. 
Since $\mathrm{rk}(a)$ is a strictly positive, lower semicontinuous function on the compact space $T(A)$, it attains its infimum
and thus there is $m\in \N$ such that 
\begin{equation}\label{eq:ma>1}
m\cdot\mathrm{rk}(a)\geq 1.
\end{equation}
Let $n\in\mathbb{N}$ be such that $G$ has weak $n$-paradoxical towers. Fix a finite subset $D\subseteq G$ with $\frac{3n}{|D|-1}(2n^2+1)<\frac 1 m$,
and use weak paradoxical towers for $G$ to 
find $K_1,\dotsc,K_n\subseteq G$ and $g_1,\dotsc,g_n\in G$ such that
\begin{enumerate}[label=(a\arabic*)]
\item $\{d K_j\}_{d\in D}$ are pairwise disjoint for all $j=1,\dotsc,n$; \label{eq:Aj disjoint}
\item $\bigcup_{j=1}^n g_jK_j=G$. \label{eq:Aj cover}
\end{enumerate}
Let $0<\ep<\frac 1{18n|D|}$. Apply part~(2) of \autoref{lma:Sam} to find $\delta>0$ such that for all positive contractions $s,t\in A$ satisfying $\|s-t\|_{2,u}<\delta$, we have 		
\begin{equation}\label{eq:delta}
d_\tau((s-\ep)_+)<d_\tau(t)+\ep, 
\end{equation}	
for all $\tau\in T(A)$.
Apply now part~(1) of \autoref{lma:Sam}, find $0<\gamma\leq \ep$, such that for all positive contractions $s,t\in A$ satisfying $\|s-t\|_{2,u}<\gamma$, we have 
\begin{equation}\label{eq:gamma}
 \left\|\big(s-\tfrac 1 {2n}\big)_+-\big(t-\tfrac 1 {2n}\big)_+\right\|_{2,u}<\delta.
\end{equation}	

By tracial amenability of $\alpha$, there is $\xi\in C_c(G,A)$ satisfying 
\begin{enumerate}[label=(b\arabic*)]
\item $\|\xi\|\leq 1$; \label{eq:xi contractive}
\item $\|\langle \xi,\xi\rangle -1\|_{2,u}<\ep$; \label{eq:xi unital}
\item $\|\widetilde{\alpha}_g(\xi)-\xi\|_{2,u}<\frac \gamma 2$, for all $g\in D\cup\{g_1,\dotsc,g_n\}$. \label{eq:xi invariant}
\end{enumerate} 
For $j=1,\dotsc,n$, set 
\[c_j=\sum_{g\in K_j}\xi(g)^*\xi(g), \ \ b_j= \big(c_j-\tfrac 1 {2n}\big)_+, \ \
 \mbox{ and } \ \ b\coloneqq b_1\oplus\dotsb\oplus b_n.
\]
For an element $x\in A$ and $\ell\in\mathbb{N}$, we will denote by $x^{\oplus \ell}\in M_\ell(A)$ the $\ell$-fold direct sum of $x$ with itself.
We will prove that $1\precsim_G a$ in two steps.

\begin{clm}\label{clm:1<b}
We have $1\precsim_0 (b-\ep)_+^{\oplus 3n}$.
		 \end{clm}
		 
		 For $j=1,\dotsc,n$ and $h\in D\cup \{g_1,\dotsc,g_n\}$, we denote by $1_{hK_j}\in \mathcal L_A(\ell^2(G,A))$ the projection onto the elements supported on $hK_j\subseteq G$. Using the Cauchy-Schwarz inequality (see part~(2) of \autoref{lma:Cauchy-Schwarz}) for the third step, we obtain 
			\begin{align}\label{eq:invTowers1}		 	
		 			 		\Big\|\alpha_h(c_j)&-\sum_{g\in hK_j}\xi(g)^*\xi(g)\Big\|_{2,u}\\
		 			 		&=\left\|\langle 1_{hK_j}\widetilde{\alpha}_h(\xi),\widetilde{\alpha}_h(\xi)\rangle -\langle 1_{hK_j}\xi,\xi\rangle \right\|_{2,u}\nonumber\\
		 		&\leq \left\| \langle 1_{hK_j}\widetilde{\alpha}_h(\xi),\widetilde{\alpha}_h(\xi)-\xi\rangle \right\|_{2,u}+ \left\|\langle \xi-\widetilde{\alpha}_h(\xi),1_{hK_j}\xi\rangle \right\|_{2,u} \nonumber \\
&\leq 2\left\|\widetilde{\alpha}_h(\xi)-\xi\right\|_{2,u}\nonumber  <\gamma \nonumber.
\end{align}

Combining \eqref{eq:gamma} and \eqref{eq:invTowers1}, we get
\begin{equation}\label{eq:invTowers2}
\Big\|\alpha_h(b_j)-\Big(\sum_{g\in hK_j}\xi(g)^*\xi(g)-\tfrac{1}{2n}\Big)_+\Big\|_{2,u}<\delta,
\end{equation}
for all $h\in D\cup \{g_1,\ldots,g_n\}$ and $j=1,\ldots, n$.
Fix $\tau\in T(A)$. For $j\in\{1,\dotsc,n\}$, denote by $\mu_j$ the measure on the spectrum of $\alpha_{g_j}(c_j)$ corresponding to $\tau|_{C^*\left(1,\alpha_{g_j}(c_j)\right)}$. We have
\begin{align}\label{eq:dtau1}
d_\tau(\alpha_{g_j}((b_j-\ep)_+))&=d_{\tau}\big(\big(\alpha_{g_j}(c_j)-\ep-\tfrac 1 {2n}\big)_+\big)\\ &=\int_{\big(\ep+\tfrac 1 {2n},1\big]}1d\mu_j\nonumber \\
&\geq \int_{[0,1]}td\mu_j-\big(\ep+\tfrac 1 {2n}\big) \nonumber \\
&= \tau(\alpha_{g_j}(c_j))-\big(\ep+\tfrac 1 {2n}\big)\nonumber \\
&\overset{\eqref{eq:invTowers1}}\geq \tau\Big(\sum_{g\in g_jK_j}\xi(g)^*\xi(g)\Big)-\big(\ep+\tfrac 1 {2n}+\gamma\big). \nonumber
\end{align}		
Furthermore, we have
\begin{align*}
\sum_{j=1}^n \tau \Big( \sum_{g\in g_jK_j}\xi(g)^*\xi(g)\Big)  &\stackrel{\mathclap{\ref{eq:Aj cover}}}{\ge} \tau \Big( \sum_{g \in G} \xi(g)^*\xi(g) \Big) \\
& \stackrel{\mathclap{\ref{eq:xi unital}}}{\ge} 1 -\varepsilon.
\end{align*}
		 	We conclude that there is $j\in \{1,\dotsc,n\}$ for which 
		 	\begin{equation}\label{eq:dtau2}
		 		\tau\Big(\sum_{g\in g_jK_j}\xi(g)^*\xi(g)\Big)\geq  \frac{1 - \varepsilon}{n} \ge \frac{1}{n} - \varepsilon.
		 	\end{equation}
		 	Combining \eqref{eq:dtau1} and \eqref{eq:dtau2}, we obtain
		 	\begin{align*}
			\sum_{j=1}^nd_\tau(\alpha_{g_j}((b_j-\ep)_+))\geq \tfrac 1 n-\ep-\Big(\ep+\tfrac 1 {2n}+\gamma\Big)\geq \tfrac 1 {2n} -3\ep 
			>\tfrac 1 {3n},
			\end{align*}
		 	for all $\tau\in T(A)$. Using strict comparison for $A$, this implies that
		 		\begin{equation}\label{eq:1<b}
		 			1\precsim \oplus_{j=1}^n\alpha_{g_j}((b_j-\ep)_+)^{\oplus 3n}.
		 		\end{equation}
		 	As $\oplus_{j=1}^{n}(b_j-\ep)_+^{\oplus 3n}=(b-\ep)_+^{\oplus 3n}$, it follows that $1\precsim_0 (b-\ep)_+^{\oplus 3n}$, proving \autoref{clm:1<b}.
		 	
		 	\begin{clm}\label{clm:b<a}
		 		We have $(b-\ep)_+^{\oplus 3n}\precsim_0 a$.
		 	\end{clm}		 	
		 	Fix $\tau\in T(A)$, $d\in D$, and $j\in \{1,\dotsc,n\}$. Denote by $\mu$ the probability measure on the spectrum of $\sum_{g\in dK_j}\xi(g)^*\xi(g)$ corresponding to the restriction of $\tau$ to $C^*\big(1,\sum_{g\in dK_j}\xi(g)^*\xi(g)\big)$. Then
\begin{align}\label{eq:dtau3}
d_\tau(\alpha_d((b_j-\ep)_+)) \quad &\stackrel{\mathclap{\eqref{eq:delta}, \eqref{eq:invTowers2}}}{\leq} \quad d_\tau\Big(\Big(\sum_{g\in dK_j}\xi(g)^*\xi(g)-\tfrac 1 {2n}\Big)_+\Big) +\ep \\
&=\int_{\left(\frac 1 {2n},1\right]}1d\mu+\ep\nonumber\\
&\leq 2n\int_{[0,1]}td\mu+\ep \nonumber\\
&= 2n \tau\Big(\sum_{g\in dK_j}\xi(g)^*\xi(g)\Big)+\ep. \nonumber
\end{align}
Taking the sum of \eqref{eq:dtau3} over $j\in \{1,\dotsc,n\}$ and $d\in D$ yields			
\begin{align}\label{eq:dtau summed up}
\sum_{d\in D}\sum_{j=1}^{n}d_\tau(\alpha_d((b_j-\ep)_+))&\leq 2n\sum_{j=1}^{n}\tau\Big(\sum\limits_{d\in D}\sum_{g\in dK_j}\xi(g)^*\xi(g)\Big)+n|D|\ep\\
				&\stackrel{\mathclap{\ref{eq:Aj disjoint},\ref{eq:xi contractive}}}{\le} \quad
				2n^2+1. \nonumber
				\end{align}		 	
		 	Since all ranks are realized in $A$, we can find positive elements $b'_d\in M_\infty(A)$ for $d\in D$ satisfying 
		 		\begin{equation}\label{eq:betad}
		 			\mathrm{rk}(b'_d)=\frac{3n}{|D|-1}\mathrm{rk}(\alpha_d((b-\ep)_+)).
		 		\end{equation}		 		
		 		Note that $(b-\ep)_+\not=0$ by \autoref{clm:1<b}. Using this at the second step, we get
		 		\begin{equation*}
		 		\begin{aligned}
					3n\cdot d_\tau((b-\ep)_+)&=\frac{3n}{|D|}\sum_{d\in D}d_{\tau\circ \alpha_{d^{-1}}}(\alpha_d((b-\ep)_+))\\
					&<\frac{3n}{|D|-1}\sum_{d\in D}d_{\tau\circ\alpha_{d^{-1}}}(\alpha_d((b-\ep)_+))\\
					&\overset{\eqref{eq:betad}}=\sum_{d\in D}d_\tau(\alpha_{d^{-1}}(b'_d)),
				\end{aligned}
	 			\end{equation*}
	 			for all $d\in D$ and $\tau\in T(A)$. Using strict comparison for $A$, it follows that
	 			\begin{equation}\label{eq:b<beta}
	 			 (b-\ep)_+^{\oplus 3n}\precsim \oplus_{d\in D}\alpha_{d^{-1}}(b'_d).
	 			 \end{equation}
	 			On the other hand, we have 
	 			\begin{equation*}
	 				\begin{aligned}
	 					 \sum_{d\in D}d_\tau(b'_d)&\overset{\eqref{eq:betad}}=\frac{3n}{|D|-1}\sum_{d\in D}d_\tau(\alpha_d((b-\ep)_+))\\&\overset{\eqref{eq:dtau summed up}}\leq \frac{3n}{|D|-1}(2n^2+1)\\ &\overset{\eqref{eq:ma>1}}<d_\tau(a)
	 				\end{aligned}
	 			\end{equation*}
	 			for all $\tau\in T(A)$. 
	 			Again by strict comparison, this implies that 
 				\begin{equation}\label{eq:beta<a}
					\oplus_{d\in D}b'_d\precsim a,			
 				\end{equation}
				and therefore
				\[
				(b - \varepsilon)_+^{\oplus 3n} \precsim_0 a.
				\]
				
 				Putting together both claims, we get $1\precsim_G a$, as required.
		 \end{proof}

The following is the desired result for crossed products. 

\begin{cor}\label{cor:main}
Let $G$ be a countable, discrete group with weak paradoxical towers, let $A$ be a unital, simple, separable C$^*$-algebra with $QT(A)=T(A)\not =\emptyset$, with strict comparison and for which all ranks are realized, and let $\alpha\colon G\to \mathrm{Aut}(A)$ be a tracially amenable, outer action. Then $A\rtimes_r G$ is simple and purely infinite. 
\end{cor}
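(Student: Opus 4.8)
The plan is to assemble this corollary from the two principal results of the section, \autoref{thm:Mainthm} and \autoref{prop:CompPurInf}, so that essentially all of the real work has already been carried out; what remains is to check that the hypotheses match up and to supply the one missing ingredient, namely the absence of invariant quasitraces.

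First I would invoke \autoref{thm:Mainthm} verbatim. The standing hypotheses are precisely its hypotheses: $G$ is a countable, discrete group with weak paradoxical towers, $A$ is unital, simple, separable with $QT(A)=T(A)\neq\emptyset$, has strict comparison, and all ranks are realized, and $\alpha$ is tracially amenable. Hence $\alpha$ has dynamical strict comparison. Next I would verify the remaining hypothesis needed to apply \autoref{prop:CompPurInf}, that $\alpha$ has no invariant quasitraces. Since $G$ has weak paradoxical towers it is nonamenable by \autoref{lma:WkParNonAm}, and since $\alpha$ is tracially amenable, \autoref{cor:NoInvariantTraces} then shows that $A$ has no $\alpha$-invariant trace. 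Because $QT(A)=T(A)$ by assumption, this is the same as saying that $\alpha$ admits no invariant quasitraces.

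With dynamical strict comparison, outerness of $\alpha$, and the absence of invariant quasitraces all in place, I would conclude by applying \autoref{prop:CompPurInf}, which yields that $A\rtimes_r G$ is simple and purely infinite. There is no serious obstacle at the level of the corollary itself: the analytic content lives entirely in \autoref{thm:Mainthm}, and the only point deserving a word of care is the translation between the ``no invariant trace'' conclusion of \autoref{cor:NoInvariantTraces} and the ``no invariant quasitrace'' hypothesis of \autoref{prop:CompPurInf}, which is exactly what the assumption $QT(A)=T(A)$ supplies.
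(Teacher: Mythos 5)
Your proposal is correct and follows exactly the paper's route: the paper's own proof of \autoref{cor:main} simply cites \autoref{thm:Mainthm} and \autoref{prop:CompPurInf}. Your extra care in deducing the absence of invariant quasitraces (via \autoref{lma:WkParNonAm}, \autoref{cor:NoInvariantTraces}, and the hypothesis $QT(A)=T(A)$) just makes explicit a step the paper leaves implicit (the same reasoning appears verbatim at the start of the proof of \autoref{thm:Mainthm}).
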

\begin{proof} 
The statement follows from \autoref{thm:Mainthm} and \autoref{prop:CompPurInf}.
\end{proof}

Let $A$ be a simple, separable, exact, unital, stably finite 
C$^*$-algebra. If $A$ has strict 
comparison and stable rank one, then all ranks are realized on it 
by \cite[Theorem~8.11]{ThielSR1}, and so $A$ satisfies the assumptions of \autoref{cor:main}. In particular, this is the
case if $A$ is $\mathcal{Z}$-stable, by  \cite[Theorem~4.5 and Theorem~6.7]{Ror_2004}. Instead of stable rank one, one can equivalently require $A$ to be almost divisible. 
Indeed, simple, separable, unital, stably finite C$^*$-algebras with strict comparison and almost divisibility have stable rank one by work in progress of Geffen and Winter (see also \cite{LinSR1}, where the assumption of almost divisibility is replaced by tracial approximate oscillation zero).

We close the paper with an application of our results to classifiable C$^*$-algebras. In the following corollary, we require
amenability of the action, as tracial amenability is not sufficient to grant nuclearity of the crossed product.

\begin{cor}\label{cor:uct} 
Let $G$ be a countable, discrete group with weak paradoxical towers, 
and let $A$ be a simple, separable, unital,
nuclear, stably finite, $\mathcal{Z}$-stable C$^*$-algebra. Suppose that
$\alpha\colon G\to \Aut(A)$ is an amenable, outer action\footnote{We do not actually know if such an action exists; see Problem~D in the introduction.}. Then $A\rtimes_r G$ is a unital Kirchberg algebra. If $A$
satisfies the UCT and $G$ is torsion-free
and 
satisfies the Haagerup property (for example, $G=F_n$; see \cite[Definition 12.2.1]{BrownOzawa}), then $A\rtimes_rG$ also
satisfies the UCT, and is therefore completely determined by its 
$K$-theory.
 \end{cor}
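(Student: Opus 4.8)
The plan is to obtain both assertions by assembling the pure infiniteness result \autoref{cor:main} with standard permanence properties of reduced crossed products and the Kirchberg--Phillips classification theorem. Since the statement is a corollary, the work consists almost entirely of verifying that the hypotheses of already-established results are in force, rather than proving anything genuinely new.

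First I would check that $A$ meets the standing assumptions of \autoref{cor:main}. Nuclearity forces exactness, so $QT(A)=T(A)$ by \cite{Haa_quasitraces_2014}, and this set is nonempty since $A$ is unital and stably finite. As $A$ is $\mathcal{Z}$-stable it has strict comparison and stable rank one by \cite[Theorem~4.5 and Theorem~6.7]{Ror_2004}, whence all ranks are realized by \cite[Theorem~8.11]{ThielSR1}. Because $\|\cdot\|_{2,u}\leq\|\cdot\|$, the amenable action $\alpha$ is automatically tracially amenable, and it is outer by hypothesis, so \autoref{cor:main} yields that $A\rtimes_r G$ is simple and purely infinite. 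I would then record that $A\rtimes_r G$ is unital (as $A$ is unital and $G$ discrete), separable (as $A$ is separable and $G$ countable), and nuclear (as $A$ is nuclear and $\alpha$ is amenable, by \cite{AD_systemes_1987}); hence it is a unital Kirchberg algebra.

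For the second assertion I would invoke permanence of the UCT under the relevant crossed products. When $G$ has the Haagerup property it satisfies the Baum--Connes conjecture with coefficients \cite{HigsonKasparov}, and feeding this into the homological machinery of \cite{MeyerNest}, together with torsion-freeness of $G$, places $A\rtimes_r G$ in the bootstrap class whenever $A$ is. Thus $A\rtimes_r G$ is a unital Kirchberg algebra satisfying the UCT, and the Kirchberg--Phillips theorem \cite{Phi_classification_2000} shows it is completely determined by its $K$-theory (together with the position of its unit), which is the desired conclusion.

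The step that I expect to require the most care is the UCT permanence: one must correctly combine the Baum--Connes conjecture with coefficients for Haagerup groups with the Meyer--Nest framework, and crucially use torsion-freeness of $G$ to avoid the contributions of finite subgroups that otherwise obstruct the UCT, a phenomenon alluded to in the introduction via \cite{Black_K} and \cite{BarSza_cartanII_2020}. Everything else reduces to checking hypotheses of the cited structural and classification results.
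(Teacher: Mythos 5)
Your proposal is correct and follows essentially the same route as the paper: verify the hypotheses of \autoref{cor:main} via the $\mathcal{Z}$-stability consequences (strict comparison, stable rank one, all ranks realized) noted after that corollary, obtain nuclearity of $A\rtimes_r G$ from \cite{AD_systemes_1987}, and deduce the UCT from \cite{HigsonKasparov} together with the Meyer--Nest framework \cite{MeyerNest} before applying \cite{Phi_classification_2000}. The paper's proof is just a more compressed version of exactly this argument.
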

 \begin{proof} 
 By \autoref{cor:main} and the comments after it, 
 $A\rtimes_r G$ is simple, separable, unital and purely infinite. 
 By \cite[Theorem~4.5]{AD_systemes_1987}, it is also nuclear. 
 The fact that it satisfies the UCT follows from \cite{HigsonKasparov} 
 and \cite[Corollary~9.4]{MeyerNest} (see \cite[Corollary 7.2]{RosSch_kunneth_1987} for the case $G = F_n$). 
 \end{proof}

We remark that \autoref{cor:uct} holds also when $A$ is purely infinite, by applying \cite[Lemma~3.2]{RordamSierakowski} (see also \cite[Lemma~4.2]{ESPA}).
 
\end{document}